\numberwithin{equation}{section}
\newtheorem{thm}{Theorem}[section]
  \theoremstyle{plain}
  \newtheorem{lem}[thm]{Lemma}
  \theoremstyle{plain}
  \newtheorem{prop}[thm]{Proposition}
  \theoremstyle{plain}
  \newtheorem{cor}[thm]{Corollary}
  \theoremstyle{plain}
  \newtheorem{rem}[thm]{Remark}
\newcommand{\eps}{\varepsilon}
\title{Soliton dynamics for the generalized Choquard equation}
\author[C.\ Bonanno]{Claudio Bonanno}
\address{Dipartimento di Matematica
\newline\indent 
Universit\`a di Pisa
\newline\indent
Largo Bruno Pontecorvo 5,  56127 Pisa, Italy}
\email{\href{mailto:bonanno@dm.unipi.it}{bonanno@dm.unipi.it}}
\author[P.\ d'Avenia]{Pietro d'Avenia}
\address{Dipartimento di Meccanica, Matematica e Management
\newline\indent 
Politecnico di Bari
\newline\indent
Via Orabona 4,  70125  Bari, Italy}
\email{\href{mailto:p.davenia@poliba.it}{p.davenia@poliba.it}}
\author[M.\ Ghimenti]{Marco Ghimenti}
\address{Dipartimento di Matematica
\newline\indent 
Universit\`a di Pisa
\newline\indent
Largo Bruno Pontecorvo 5,  56127 Pisa, Italy}
\email{\href{mailto:marco.ghimenti@dma.unipi.it}{marco.ghimenti@dma.unipi.it}}
\author[M.\ Squassina]{Marco Squassina}
\address{Dipartimento di Informatica
\newline\indent
Universit\`a degli Studi di Verona
\newline\indent
C\'a Vignal 2, Strada Le Grazie 15, 37134 Verona, Italy}
\email{\href{mailto:marco.squassina@univr.it}{marco.squassina@univr.it}}
\thanks{The authors were supported by 2009 MIUR project:
   ``Variational and Topological Methods in the Study of Nonlinear Phenomena''. This work has been partially carried out during a stay of M.\ Squassina in Pisa. He would like to express his deep gratitude to the Dipartimento di Matematica for the warm hospitality.}
\subjclass[2000]{35Q51, 35Q40, 35Q41}
\keywords{Soliton dynamics, Choquard equation, Hartree equation, modulational stability, ground states.}
\begin{document}

\begin{abstract}
We investigate the soliton dynamics for a class of nonlinear Schr\"odinger equations with a non-local nonlinear term. In particular, we consider what we call {\em generalized Choquard equation} where the nonlinear term is $(|x|^{\theta-N} * |u|^p)|u|^{p-2}u$.
This problem is particularly interesting because the ground state solutions are not known to be unique or non-degenerate.
\end{abstract}

\maketitle



\section{Introduction}

\noindent
The {\em soliton dynamics} of the nonlinear Schr\"odinger equation
\begin{equation*}
i\eps\frac{\partial\psi}{\partial t}
=-\frac{\eps^{2}}{2m}\Delta\psi+V(x)\psi
-f(|\psi|)\psi
\quad\hbox{in } (0, \infty)\times \mathbb{R}^N
\end{equation*}
in the last decade has been the object of 
many mathematical studies. In the case of pure power nonlinearities we just mention the 
fundamental papers \cite{BJ00,kera06,FGJS04}. Even if the results are accomplished by completely different methods, in all these papers the non-degeneracy of the ground states of the stationary equation plays a fundamental role in getting the modulational equation originally devised by Weinstein in \cite{We85,We86}. Recently, a new approach was developed in \cite{BGM1,BGM2} not requiring the non-degeneracy of the ground states.\\
Another important class of nonlinearities are the non-local Hartree type nonlinearities, i.e.
$$
f(|\psi|)\psi=\Big(\frac 1{|x|}\ast |\psi|^2\Big)\psi.
$$
Hartree nonlinearities arise in several examples of mathematical physics, as the mean field limit of weakly interacting  molecules  (see \cite{Li80} and the references therein), in the Pekar theory of polarons (see \cite{Pe54,Pe63} and \cite{LR} for further references), in Schr\"odinger-Newton systems \cite{froltsaiyau} or, with a semi-relativistic differential operator, in boson stars modeling \cite{Fro}. In the Hartree case, the non-degeneracy of ground states has been investigated only recently by Lenzmann in \cite{Lenz} and the solitonic dynamics in \cite{PieMar}.\\
The goal of this paper is to obtain a soliton dynamics behavior 
for a general class of Hartree type nonlinearities for which, currently,
neither {\em uniqueness} nor {\em non-degeneracy} of ground states are known,
by exploiting the techniques of \cite{BGM1,BGM2}. This further corroborates
the usefulness and impact of the ideas developed in these papers on a problem
which has recently attracted the attention of many researchers, especially 
in the stationary case.\\
We consider the following {\em generalized Choquard equation}
\begin{equation}
\label{GCE}
\tag{$\mathcal{GC}$}
i\eps\frac{\partial\psi}{\partial t}
=-\frac{\eps^{2}}{2m}\Delta\psi+V(x)\psi
-\left(I_\theta*|\psi|^{p}\right)|\psi|^{p-2}\psi
\quad\hbox{with } (t,x)\in (0, \infty)\times \mathbb{R}^N,
\end{equation}
where $N\geq 3$, $\psi:[0,\infty)\times\mathbb{R}^{N}\rightarrow\mathbb{C}$, $\eps$ is the Planck constant,
$m>0$ and
\[
I_\theta(x):=\frac{\Gamma(\frac{N-\theta}{2})}{\Gamma(\frac{\theta}{2})\pi^{N/2}2^\theta |x|^{N-\theta}},
\]
with $\theta\in(0,N)$ a real parameter and 
\begin{equation}
\label{p-rang}
p\in\Big(1+\frac{\theta}{N},1+\frac{2+\theta}{N}\Big).
\end{equation}
Moreover let $V:\mathbb{R}^N \to \mathbb{R}$ be a $C^2$-function satisfying
\begin{enumerate}[label=(V\arabic*),ref=V\arabic*]
\setcounter{enumi}{-1}
\item \label{it:V0} $V\geq 0$;
\item \label{it:V1} $|\nabla V (x)| \leq (V(x))^b$ for $|x|> R_1>1$ and $b\in(0,1)$;
\item \label{it:V2} $V(x)\geq |x|^a$ for $|x|> R_1>1$ and $a>1$.
\end{enumerate} 
By the rescaling $\hat{\psi}(t,x)=m^{-\frac{\theta}{4(p-1)}}\eps^{\frac{\alpha-\gamma(2p-1)}{2(p-1)}}\psi(t,x/\sqrt{m})$,
equation \eqref{GCE} can be written as
\begin{equation}
\label{eq:transf}
i\eps\frac{\partial\hat\psi}{\partial t}=
-\frac{\eps^{2}}{2}\Delta\hat\psi
+\hat V(x)\hat\psi
-\eps^{\gamma(2p-1)-\alpha}(I_\theta*|\hat\psi|^{p})|\hat\psi|^{p-2}\hat\psi,
\end{equation}
where $\alpha$ and $\gamma$ are real parameters and $\hat{V}(x)=V(x/\sqrt{m})$.
We remark that in \cite{PieMar} it has been treated the physical case ($N=3$, $\theta=2$, $p=2$), passing from \eqref{GCE} to \eqref{eq:transf} by using the same rescaling with $\gamma=0$ and $\alpha=2$.
Hence in this paper we study the problem
\begin{equation}
\left\{ \begin{array}{l}
{\displaystyle i\eps\frac{\partial\psi}{\partial t}=-\frac{\eps^{2}}{2}\Delta\psi+V(x)\psi-\eps^{\gamma(2p-1)-\alpha}
\left(I_\theta*|\psi|^{p}\right)|\psi|^{p-2}\psi}
\\
{\displaystyle \psi\left(0,x\right)=U_{\eps}(x)e^{\frac{i}{\eps}x\cdot v}},
\end{array}\right.
\tag{$\mathcal{P}_{\eps}$}
\label{ch}
\end{equation}
where 
$v\in\mathbb{R}^N$ and
\begin{equation}
\label{eq:uh}
U_{\eps}(x)=\eps^{-\gamma}U(\eps^{-\beta}x),
\end{equation} 
$U$ being a real solution of 
\begin{equation}
\frac{1}{2}\Delta U+\left(I_\theta*|U|^{p}\right)|U|^{p-2}U=\omega U,\label{eq}
\end{equation}
with $\omega>0$, and $\beta\in\mathbb{R}$. 
Concerning local and global well-posedness of solutions in 
$H^1({\mathbb R}^N)$ to
\eqref{ch}, as well as conservation laws, in the case $\theta=2$, we refer the 
reader to \cite{genev-venkov}. In the general case $\theta\neq 2$, we shall assume that local
well-posedness holds (being global existence easy to show in the range
\eqref{p-rang} of values of $p$).
The solutions to problem \eqref{eq} have recently 
been object of various deep investigations from the point of  view of
regularity, qualitative properties such as symmetry and asymptotic behaviour and concentration properties of semiclassical states.
We refer the reader to \cite{CCS,morozschaft,morozschaft3,MZ,yutian}.
Concerning
uniqueness of positive radial solutions to \eqref{eq}, to our knowledge,
after the original contribution due to Lieb \cite{Lieb}, a result can be found in 
\cite{genev-venkov} in the particular case $\theta=2$. Finally, about the nondegeneracy 
of the ground states of \eqref{eq}, the only case where it is known, is to our knowledge,
when $N=3$, $\theta=2$ and $p=2$, see \cite{Lenz,WW}.\\
A problem similar to \eqref{ch} arises in the study of equation \eqref{GCE} with so-called semi-classical wave packets (or coherent states) as initial data, see for example \cite{CaoCa}, and also \cite{CaKa} where the same problem has been studied for the nonlinear Schr\"odinger equation with local nonlinear term. The main difference with our approach is that in the papers \cite{CaoCa,CaKa} the idea is to fix initial conditions with $\beta = \frac 12$ and $\gamma = \frac N4$ in \eqref{eq:uh}, and then to study the behaviour of the solution varying the power of $\eps$ in front of the nonlinear term. Instead, we choose the initial conditions according to the values of $\gamma$ and $\alpha$, see conditions \eqref{relfond}, \eqref{eq:rel2} and \eqref{betam1} below.

\medskip
\noindent
The following is the main result of the paper.

\begin{thm}
\label{teo1}
Assume that conditions {\rm (\ref{it:V0})--(\ref{it:V2})} hold, that the solution $\psi$ to problem \eqref{ch} is in $C([0,\infty),H^2(\mathbb{R}^N))\cap C^1((0,\infty),L^2(\mathbb{R}^N))$, that
\[
\beta=\frac{\alpha + 2 - \gamma}{\theta + 2} >1,
\]
and $p$ is as in \eqref{p-rang}.
Then the barycenter 
\begin{equation}
q_{\eps}(t)
:=
\frac{1}{\|\psi(t)\|_2^2} \int_{\mathbb{R}^{N}}x|\psi(t,x)|^{2}dx,
\label{eqbar}
\end{equation}
of the solution $\psi$
to problem~\eqref{ch} satisfies the Cauchy problem
\begin{equation}
\label{moto}
\begin{cases}
\ddot{q}_\eps (t) + \nabla V(q_\eps(t))=H_\eps(t),\\
q_\eps(0)=0,\\
\dot{q}_\eps (0)=v,
\end{cases}
\end{equation}
where $\|H_\eps\|_{L^\infty(0,\infty)}\to 0$ as $\eps\to 0^+$.
\end{thm}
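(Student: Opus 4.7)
The plan is to first reduce the theorem to a concentration statement for $|\psi(t,\cdot)|^2$ around $q_\varepsilon(t)$ via Ehrenfest-type identities, and then to establish that concentration uniformly in $t$ by exploiting the conservation laws and a concentration-compactness argument.

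\textbf{Ehrenfest reduction.} Mass conservation $\|\psi(t)\|_2 = \|\psi(0)\|_2$ is standard (pair \eqref{ch} with $\bar\psi$ and take imaginary parts, noting that both $V\psi$ and the convolution term give real contributions). A direct computation yields
\begin{equation*}
\dot q_\varepsilon(t) = \frac{\varepsilon}{\|\psi(t)\|_2^2}\,\mathrm{Im}\int_{\mathbb{R}^N}\bar\psi(t,x)\,\nabla\psi(t,x)\,dx.
\end{equation*}
Differentiating once more, the linear potential term yields $-\nabla V$ while the Choquard term contributes nothing, as the functional $\psi\mapsto\int(I_\theta\ast|\psi|^p)|\psi|^p\,dx$ is translation invariant (a consequence of the symmetry of the kernel $I_\theta$) and hence its variational derivative commutes with $\nabla$. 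Therefore
\begin{equation*}
\ddot q_\varepsilon(t) = -\frac{1}{\|\psi(t)\|_2^2}\int_{\mathbb{R}^N}\nabla V(x)\,|\psi(t,x)|^2\,dx,
\end{equation*}
and writing $\nabla V(x)=\nabla V(q_\varepsilon(t))+(\nabla V(x)-\nabla V(q_\varepsilon(t)))$ reduces the theorem to showing that
\begin{equation*}
H_\varepsilon(t):=\frac{1}{\|\psi(t)\|_2^2}\int_{\mathbb{R}^N}\bigl(\nabla V(q_\varepsilon(t))-\nabla V(x)\bigr)|\psi(t,x)|^2\,dx
\end{equation*}
tends to zero in $L^\infty(0,\infty)$.

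\textbf{Rescaling and energy framework.} To produce the needed concentration I would introduce the translated, Galilean-boosted and rescaled profile
\begin{equation*}
\phi_\varepsilon(t,y):=\varepsilon^{\gamma}\,e^{-i\dot q_\varepsilon(t)\cdot(\varepsilon^\beta y+q_\varepsilon(t))/\varepsilon}\,\psi\bigl(t,\varepsilon^\beta y+q_\varepsilon(t)\bigr),
\end{equation*}
so that $\|\phi_\varepsilon(t)\|_2=\|U\|_2$ by mass conservation and the barycenter of $|\phi_\varepsilon(t)|^2$ is zero by construction. The precise choice $\beta=(\alpha+2-\gamma)/(\theta+2)$ is exactly what makes the Dirichlet, mass, and convolution-nonlinear terms of the stationary functional associated with \eqref{eq} scale homogeneously in $\varepsilon$; consequently, the conserved energy $E_\varepsilon(\psi(t))$, once the classical mechanical part $\tfrac12|\dot q_\varepsilon(t)|^2\|\psi(t)\|_2^2+V(q_\varepsilon(t))\|\psi(t)\|_2^2$ has been subtracted, agrees up to a small error with an $\varepsilon$-power of the stationary Choquard energy of $\phi_\varepsilon(t)$. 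The initial datum \eqref{eq:uh} being built from a solution $U$ of \eqref{eq}, this proves that $\phi_\varepsilon(t)$ is a minimizing sequence, uniformly in $t$, for the variational problem characterizing the ground states of \eqref{eq} on the sphere $\|\cdot\|_2=\|U\|_2$. Boundedness of $q_\varepsilon$ and $\dot q_\varepsilon$ along the trajectory, needed here, follows from (\ref{it:V0})--(\ref{it:V2}) together with $\int V|\psi|^2\le E_\varepsilon(\psi(0))$.

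\textbf{Concentration-compactness and conclusion.} Following the strategy of \cite{BGM1,BGM2}, I would then apply Lions' concentration-compactness principle to $|\phi_\varepsilon(t)|^2$. In the subcritical range \eqref{p-rang}, the ground-state level is strictly subadditive in the mass, ruling out dichotomy, while vanishing is incompatible with the limiting value of the energy. Compactness then gives that, along any sequences $\varepsilon_n\to 0^+$ and $t_n\in[0,\infty)$, the profile $\phi_{\varepsilon_n}(t_n,\cdot)$ converges strongly in $H^1$ to an element of the set $\mathcal G$ of ground states of \eqref{eq} on that mass sphere; the translation parameter is zero because of the pinning $\int y|\phi_\varepsilon(t)|^2dy=0$. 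This provides the concentration of $|\psi(t,\cdot)|^2$ around $q_\varepsilon(t)$ on scale $\varepsilon^\beta$, uniformly in $t$. Feeding it into $H_\varepsilon$, the Lipschitz bound on $\nabla V$ on the bounded range of $q_\varepsilon$ controls the bulk contribution, while (\ref{it:V1})--(\ref{it:V2}) and the uniform $L^2$-bound $\int V|\psi|^2\lesssim 1$ control the tail $|x-q_\varepsilon(t)|\gg\varepsilon^\beta$.

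\textbf{Main obstacle.} The delicate point is the uniformity in $t$ of the compactness statement when the ground states of \eqref{eq} are neither known to be unique nor non-degenerate. One cannot pin $\psi$ to a single modulation manifold and linearize; instead one must establish the compactness of the whole ground-state set $\mathcal G$ (modulo translation and phase) in $H^1$ and transport this compactness to the time-parametrized family $\{\phi_\varepsilon(t)\}_{t\ge 0}$ via the conservation laws. It is precisely this step that the technology of \cite{BGM1,BGM2} supplies in place of the non-degeneracy assumption, and I expect it to be the technical heart of the proof.
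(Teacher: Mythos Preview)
Your overall strategy---Ehrenfest reduction, then concentration via a minimizing-sequence argument driven by the conservation laws---is the paper's strategy too. But two of your steps contain real errors, and the paper handles both points differently.

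First, the bound $\int V|\psi|^2 \le E_\varepsilon(\psi(0))$ is false as a useful inequality. Because $\beta>1$ and $m_\sigma<0$, the conserved energy satisfies $E_\varepsilon(\psi)\le \varepsilon^{2(1-\beta)}m_\sigma + C\to -\infty$, so this gives nothing. The paper instead combines the upper bound $E_\varepsilon(\psi)\le \varepsilon^{2(1-\beta)}m_\sigma + C$ (from the initial datum) with the \emph{lower} bound $J_\varepsilon(u_\varepsilon)\ge \varepsilon^{2(1-\beta)}m_\sigma$ (from the definition of $m_\sigma$ after rescaling) to obtain
\[
\int V|\psi|^2 = E_\varepsilon(\psi)-J_\varepsilon(u_\varepsilon)-\tfrac12\int|\nabla S|^2u_\varepsilon^2\le C.
\]
This two-sided energy squeeze is what makes $\int V|\psi|^2$ bounded uniformly in $t$ and $\varepsilon$; without it you cannot bound $q_\varepsilon$ or control the tails via (\ref{it:V2}).

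Second, the paper does \emph{not} introduce a Galilean-boosted profile centered at $q_\varepsilon(t)$, and your claimed energy splitting $E_\varepsilon(\psi) \approx \tfrac12|\dot q_\varepsilon|^2\|\psi\|_2^2 + V(q_\varepsilon)\|\psi\|_2^2 + \varepsilon^{2(1-\beta)}J(\phi_\varepsilon)$ is circular: the approximation $\int V|\psi|^2\approx V(q_\varepsilon)\|\psi\|_2^2$ is essentially the conclusion $H_\varepsilon\to 0$. The paper instead rescales only the modulus, $v(t,y)=\varepsilon^\gamma|\psi(t,\varepsilon^\beta y)|$, and uses the trivial monotonicity $J_\varepsilon(u_\varepsilon)\le E_\varepsilon(\psi)$ (since $G\ge 0$) to get $J(v(t,\cdot))\le m_\sigma + C\varepsilon^{2(\beta-1)}$. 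Concentration--compactness (Lemma~\ref{segnomin}) then yields a concentration point $\hat q_\varepsilon(t)$, a priori unrelated to the barycenter. The distance $|q_\varepsilon(t)-\hat q_\varepsilon(t)|$ is estimated separately by splitting the integral $\int(x-\hat q_\varepsilon)|\psi|^2$ and using the bound on $\int V|\psi|^2$ together with (\ref{it:V2}). This replaces your ``barycenter pinning'' argument, which is incomplete as stated: the condition $\int y|\phi_\varepsilon(t,y)|^2\,dy=0$ does not pass to the limit under $H^1$-convergence without independent first-moment control.
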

\noindent
As explained above, we obtain as a corollary the same result for equation \eqref{GCE}, namely

\begin{cor}
\label{cor-main}
Under the same assumptions of Theorem \ref{teo1}, the solution $\psi(t,x)$ to equation \eqref{GCE} with initial condition
$$
\psi(0,x) = \eps^{\frac{\gamma-\alpha}{2(p-1)}}\, U(\eps^{-\beta} x)\, e^{\frac{i}{\eps}x\cdot v}
$$
has a barycenter $q_{\eps}(t)$ which satisfies equation \eqref{moto} with $\|H_\eps\|_{L^\infty(0,\infty)}\to 0$ as $\eps\to 0^+$.
\end{cor}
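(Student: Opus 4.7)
The plan is to reduce Corollary \ref{cor-main} to Theorem \ref{teo1} by means of the rescaling transformation introduced in the introduction. Starting from a solution $\psi$ of \eqref{GCE} with the prescribed initial datum, I set
\[
\hat\psi(t,x) := m^{-\frac{\theta}{4(p-1)}}\, \eps^{\frac{\alpha-\gamma(2p-1)}{2(p-1)}}\, \psi(t, x/\sqrt{m}).
\]
A direct computation, combining the chain rule with the homogeneity identity $I_\theta(x/\sqrt{m}) = m^{(N-\theta)/2}\, I_\theta(x)$ for the Riesz kernel, shows that $\hat\psi$ solves \eqref{eq:transf}, that is, equation \eqref{ch} with the rescaled potential $\hat V(x) = V(x/\sqrt{m})$; the $\eps$-power prefactor in the definition of $\hat\psi$ is fixed precisely so that the correct factor $\eps^{\gamma(2p-1)-\alpha}$ emerges in front of the Hartree nonlinearity.

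Next, I would verify that the specified initial datum of $\psi$ transforms into one of the form $U_\eps(x)\, e^{ix \cdot v'/\eps}$ required by \eqref{ch}, the velocity $v$ and the profile $U$ being suitably relabeled by the action of the dilation $x \mapsto x/\sqrt{m}$. The rescaled potential $\hat V$ inherits conditions (\ref{it:V0})--(\ref{it:V2}) from $V$ (the constant $R_1$ is updated but the exponents $a$ and $b$ are preserved), and the assumptions on $\beta$ and $p$ remain those of Theorem \ref{teo1}. Hence the theorem applies to $\hat\psi$ and yields that the associated barycenter
\[
\hat q_\eps(t) := \frac{1}{\|\hat\psi(t)\|_2^2}\int_{\mathbb{R}^{N}} x\,|\hat\psi(t,x)|^{2}\,dx
\]
satisfies $\ddot{\hat q}_\eps(t) + \nabla \hat V(\hat q_\eps(t)) = \hat H_\eps(t)$ with $\|\hat H_\eps\|_{L^\infty(0,\infty)} \to 0$ as $\eps \to 0^+$, together with the corresponding initial conditions.

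The final step is to undo the change of variables. Substituting $y = x/\sqrt{m}$ in the integrals defining $\|\hat\psi(t)\|_2^2$ and the numerator of $\hat q_\eps(t)$ produces the linear relation $\hat q_\eps(t) = \sqrt{m}\, q_\eps(t)$, while $\nabla \hat V(\sqrt{m}\, y) = m^{-1/2}\, \nabla V(y)$ translates the gradient term. Combining these identities converts the ODE satisfied by $\hat q_\eps$ into the Cauchy problem \eqref{moto} for $q_\eps$, with a new remainder $H_\eps$ equal to a fixed, mass-dependent multiple of $\hat H_\eps$, and therefore still vanishing in $L^\infty$. I expect the only genuine difficulty to be notational: Theorem \ref{teo1} supplies the whole analytical content, and what remains is the algebraic bookkeeping of how the velocity $v$, the profile $U$, and the mass $m$ propagate through the dilation.
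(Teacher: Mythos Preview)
Your approach is correct and coincides with what the paper does: the paper does not give a separate proof of Corollary~\ref{cor-main}, treating it as an immediate consequence of Theorem~\ref{teo1} via the rescaling $\hat\psi(t,x)=m^{-\frac{\theta}{4(p-1)}}\eps^{\frac{\alpha-\gamma(2p-1)}{2(p-1)}}\psi(t,x/\sqrt{m})$ introduced before \eqref{eq:transf}. Your write-up is in fact more detailed than the paper's own justification, which amounts to the single sentence ``As explained above, we obtain as a corollary the same result for equation \eqref{GCE}''.
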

\noindent
We remark that, contrarily to the results obtained for example in \cite{BJ00}, we have no information about the shape of the solution. This is due to the fact that we use no information about the uniqueness or non-degeneracy of the ground states, so we cannot conclude that the solution stays close to some specific function. However we show that the solution is concentrated (see Proposition \ref{teoconcinf}) and we have information about the motion of its barycenter. The first part of the resut is achieved by using the only information that the minimizing sequences for the constrained variational problem associated to equation \eqref{eq} are relatively compact.
\vskip3pt
\noindent
The paper is organized as follows.
In Section~\ref{preliminary}, we give some preliminary results on the relations between the parameters, on the first integrals of our equation and on the existence and properties of the ground states. In particular,  the ground states of equation \eqref{eq} are constrained minimizers for a functional $J$ on the set of functions with fixed $L^{2}$-norm (see Lemmas \ref{le:infmin0}, \ref{le:gsm} and also \cite{morozschaft}) and, as explained above, we show
the pre-compactness, up to translations, of the minimizing sequences for $J$ (see Lemma \ref{segnomin}).
Finally, in Section~\ref{concentrat} we show the concentration behaviour in the semi-classical limit and we conclude by Section~\ref{final} proving Theorem \ref{teo1}.

\vskip3pt
\noindent
In the paper we denote by $C$ a generic positive constant which can change from line to line.

\section{Preliminary tools}
\label{preliminary}

\subsection{Relations between the parameters}

Let $\omega_{\eps}\in\mathbb{R}$ and $U_\eps$ as in \eqref{eq:uh}. We require that 
$$
\psi(t,x)=U_{\eps}(x)e^{i\frac{\omega_{\eps}}{\eps}t}
$$ 
solves \eqref{ch} with $V\equiv 0$, so that $V$ can be interpreted as a perturbation term. Hence we ask that $\psi$ solves
\begin{equation}
\label{V0}
i\eps\frac{\partial\psi}{\partial t}=-\frac{\eps^{2}}{2}\Delta\psi-\eps^{\gamma(2p-1)-\alpha}\left(I_\theta*|\psi|^{p}\right)|\psi|^{p-2}\psi,
\end{equation}
i.e. that $U_{\eps}$ solves
\[
\frac{\eps^{2}}{2} \Delta U_{\eps}
+\eps^{\gamma(2p-1)-\alpha}\left(I_\theta*|U_{\eps}|^{p}\right)|U_{\eps}|^{p-2}U_{\eps}
=\omega_{\eps}U_{\eps}.
\]
So we establish a relation between $\beta$ and the other parameters.
Since
\[
[\left(I_\theta*|U_{\eps}|^{p}\right)|U_{\eps}|^{p-2}U_{\eps}](x)
=
e^{i\frac{\omega_{\eps}}{\eps}t}
\eps^{\beta\theta-\gamma(2p-1)}
\left[ \left(I_\theta*|U|^{p}\right)|U|^{p-2}U\right](\eps^{-\beta}x),
\]
then
\[
\frac{\eps^{2-\gamma-2\beta}}{2}\Delta U
+\eps^{\beta\theta-\alpha}\left(I_\theta*|U|^{p}\right)|U|^{p-2}U=\omega_{\eps}\eps^{-\gamma}U.
\]
Thus, $\psi (t,x)=U_{\eps}(x)e^{i\frac{\omega_{\eps}}{\eps}t}$ is a solution of \eqref{V0} if 
\begin{equation}
\beta=\frac{\alpha+2-\gamma}{\theta+2}\label{relfond}
\end{equation}
and 
\begin{equation*}
\omega_{\eps}=\omega\eps^{2-2\beta}.
\end{equation*}
In the following we always assume (\ref{relfond}).

\subsection{The first integrals of NSE\label{fio}}

Noether's theorem states that any invariance for a one-parameter group
of the Lagrangian implies the existence of an integral of motion (see
e.g. \cite{Gelfand}).
Now we describe the first integrals for \eqref{ch} which will be relevant for this
paper, namely the {\em hylenic charge} and the {\em energy}.\\
Following \cite{hylo}, the \textit{hylenic charge}  (or simply \emph{charge}) is defined as the quantity which is preserved by the invariance
of the Lagrangian with respect to the action 
\[
\psi\mapsto e^{i\theta}\psi.
\]
For the equation in (\ref{ch}) the charge is nothing else but the $L^{2}$-norm, namely
\[
C(\psi)
=\int\left\vert \psi\right\vert ^{2}
=\int u^{2}.
\]
The {\em energy}, by definition, is the quantity which is preserved
by the time invariance of the Lagrangian. It has the form
\[
E_{\eps}(\psi)
=\frac{\eps^{2}}{2}\int | \nabla\psi|^{2}+\int V(x)| \psi|^{2}
-\frac{\eps^{\gamma(2p-1)-\alpha}}{p}\int \left(I_\theta*|\psi|^{p}\right)|\psi|^{p}.
\]
Writing $\psi$ in the polar form $ue^{\frac{i}{\eps}S}$
we get
\begin{equation}
E_{\eps}(\psi)=
\frac{\eps^{2}}{2}\int\left\vert \nabla u\right\vert ^{2}
-\frac{\eps^{\gamma(2p-1)-\alpha}}{p}\int \left(I_\theta*|u|^{p}\right)|u|^{p}
+\int\left(\frac{1}{2}\left\vert \nabla S\right\vert ^{2}+V(x)\right)u^{2}.
\label{Schenergy}
\end{equation}
Thus the energy has two components: the \textit{internal energy} (which,
sometimes, is also called \textit{binding energy}) 
\begin{equation*}
J_{\eps}(u)
=\frac{\eps^{2}}{2}\int\left\vert \nabla u\right\vert ^{2}
-\frac{\eps^{\gamma(2p-1)-\alpha}}{p}\int \left(I_\theta*|u|^{p}\right)|u|^{p}
\end{equation*}
and the \textit{dynamical energy} 
\begin{equation*}
G(u,S)=\int\left(\frac{1}{2}\left\vert \nabla S\right\vert ^{2}+V(x)\right)u^{2}
\end{equation*}
which is composed by the {\it kinetic energy}
\[
\frac{1}{2}\int\left\vert \nabla S\right\vert ^{2}u^{2}
\]
and the \textit{potential energy} 
\[
\int V(x)u^{2}.
\]
Finally we define the {\em momentum}
\begin{equation}
\label{eq:peps}
p_\eps(t,x):=\frac{1}{\eps^{N-1}} \mathfrak{Im} (\bar{\psi}(t,x) \nabla \psi(t,x)), \quad x\in{\mathbb R}^N,\; t\in [0,\infty).
\end{equation}
Arguing as in \cite[Lemma 3.3]{PieMar}, if 
$\psi\in C([0,\infty),H^2(\mathbb{R}^N))\cap C^1((0,\infty),L^2(\mathbb{R}^N))$, the map 
$$t\mapsto \int p_\eps(t,x)dx$$
is $C^1$ and, on the solutions,
the following identities hold:
\begin{align}
\label{p1} 
\displaystyle \eps^{-N}\partial_t |\psi(t,x)|^2
= &
-\operatorname{div}(p_\eps(t,x)),  
&& 
t\in [0,\infty),\; x\in{\mathbb R}^N, 
\\
\label{p2} 
\partial_t \int  p_\eps (t,x) dx 
= &
- \eps^{-N}\int \nabla V(x) |\psi(t,x)|^2 dx,
&& 
t\in [0,\infty).
\end{align}

\subsection{Rescaling of internal energy and charge}

If we consider again $U_{\eps}$ as in \eqref{eq:uh}, since
\[
\int \left(I_\theta*|U_{\eps}|^{p}\right)|U_{\eps}|^{p}
=
\eps^{\beta(N+\theta)-2p\gamma}
\int \left(I_\theta*|U|^{p}\right)|U|^{p}
\]
by (\ref{relfond}), we have
\begin{equation}
\label{eq:Jscal}
J_{\eps}(U_{\eps})
=\eps^{2-2\gamma+\beta(N-2)}J(U)
\end{equation}
where
\[
J(u)=\frac{1}{2}\int|\nabla u|^2  
- \frac{1}{p} \int (I_\theta*|u|^{p})|u|^{p}.
\]
As pointed out in \cite{morozschaft}, $J$ is of class $C^1$ on $H^1(\mathbb{R}^N)$ and for every $u,v\in H^1(\mathbb{R}^N)$
\[
\langle J'(u),v \rangle
=
\int \nabla u \cdot \nabla v - 2 \int (I_\theta*|u|^{p})|u|^{p-2}uv.
\]
Moreover, computing the charge of a rescaled function, we have
\[
C(U_{\eps})
=\eps^{N\beta-2\gamma}C(U)
\]
We can choose, without loss of generality, that 
\begin{equation}
N\beta-2\gamma=0,\label{eq:rel2}
\end{equation}
in order to have the same charge for any rescaling and to
simplify the notations.\\
Thus, combining (\ref{eq:Jscal}) and (\ref{eq:rel2}) we get
\begin{equation*}
\label{eq:rel3}
J_{\eps}(U_{\eps})=\eps^{2(1-\beta)}J(U),
\end{equation*}
so, when 
\begin{equation}
\label{betam1}
\beta>1
\end{equation}
we have that $J_{\eps}(U_{\eps})\rightarrow +\infty$ for $\eps\rightarrow 0^+$
which will be the key tool for the main result of this paper. 
\begin{rem}
In the physical case ($N=3$ and $\theta=2$), in order to satisfy conditions \eqref{relfond}, \eqref{eq:rel2} and \eqref{betam1}, we have can any couple $(\alpha,\gamma)$ on the line $3 \alpha + 6 - 11 \gamma = 0$ with $\gamma >3/2$. This choice implies that for $p=2$ the power $\gamma(2p-1) - \alpha$ is, in the notation of \cite{CaoCa}, super-critical, indeed $\gamma(2p-1) - \alpha - \frac 32  <0$.

\end{rem}

\subsection{Ground states}
Let $\omega > 0$. 
A ground state for \eqref{eq}
is a solution that realizes the minimum of the energy
\[
E_\omega (u) 
= \frac{1}{2} \int |\nabla u |^2 +  \omega \int |u|^2 - \frac{1}{p} \int \left(I_\theta*|u|^{p}\right)|u|^{p}
\]
on the set
\[
\mathcal{N}_\omega
=\left\{u\in H^1(\mathbb{R}^N)\setminus\{0\} \;\vert\; \frac{1}{2} \int |\nabla u |^2 +  \omega \int |u|^2 = \int \left(I_\theta*|u|^{p}\right)|u|^{p}\right\}.
\]
A ground state can be found in several ways.
In the recent paper \cite{morozschaft}, for instance, the authors minimize
\[
S_{\theta,p}(u)=\frac{\|\nabla u\|_2^2 + \omega \| u \|_2^2}{\left(\int (I_\theta * |u|^p)|u|^p\right)^{1/p}}
\quad
\hbox{in } H^1(\mathbb{R}^N)\setminus\{0\}.
\]
This way allows to obtain a sharp result on the existence with respect to the parameter $p$. In the following lemma we summarize some results obtained in \cite{morozschaft}.
\begin{lem}
\label{le:gs}
Let $N\geq 3$, $\theta\in (0,N)$ and $p\in (1+\theta/N, (N+\theta)/(N-2))$. We have that  \eqref{eq} admits a ground state solution $U$ in $H^1(\mathbb{R}^N)$. Moreover each ground state $U$ of \eqref{eq} is in $L^1(\mathbb{R}^N)\cap C^\infty(\mathbb{R}^N)$, it has fixed sign and there exist $x_0\in\mathbb{R}^N$ and a monotone real function $v\in C^\infty(0,\infty)$ such that $U(x)=v(|x-x_0|)$ a.e. in $\mathbb{R}^N$.
\end{lem}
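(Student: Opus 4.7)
The plan is to follow the variational strategy of \cite{morozschaft} and obtain $U$ as a minimizer of the scale invariant quotient
$$
S_{\theta,p}(u) = \frac{\|\nabla u\|_2^2 + \omega\|u\|_2^2}{\left(\int (I_\theta*|u|^p)|u|^p\right)^{1/p}}
$$
on $H^1(\mathbb{R}^N)\setminus\{0\}$. First I would verify that the infimum $S^{*}>0$ is well defined: by the Hardy--Littlewood--Sobolev inequality one has $\int(I_\theta*|u|^p)|u|^p\le C\|u\|_r^{2p}$ with $r=2Np/(N+\theta)$, and the range $1+\theta/N<p<(N+\theta)/(N-2)$ translates precisely into $2<r<2^{*}$, so $\|u\|_r$ is controlled by Sobolev/Gagliardo--Nirenberg interpolation by the numerator. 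This yields both finiteness and strict positivity of $S^{*}$.

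For a minimizing sequence $(u_n)$, after normalizing so that the denominator equals one, $(u_n)$ is bounded in $H^1$, and I would apply the concentration--compactness principle of Lions. Vanishing is ruled out because the denominator stays bounded below; dichotomy is ruled out by the strict subadditivity of the minimum levels, which follows from the strict concavity in the mass of the Hartree-type interaction (a splitting would lower $S^{*}$ by a classical computation based on the Riesz kernel). Hence, up to a translation, $u_n\to U$ strongly in $H^1$, and $U$ is a weak solution of its Euler--Lagrange equation; a scaling $U(x)\mapsto\lambda U(\mu x)$ with appropriate $\lambda,\mu$ absorbs the Lagrange multiplier and produces exactly \eqref{eq}.

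For the qualitative properties I would argue as follows. Since $S_{\theta,p}(|U|)=S_{\theta,p}(U)$, one may replace $U$ by $|U|\ge 0$; the strong maximum principle applied to $-\Delta U+2\omega U=2(I_\theta*U^{p})U^{p-1}\ge 0$ then gives $U>0$, from which the fixed-sign conclusion follows for every ground state up to a global phase. Regularity is obtained by the usual bootstrap: from $U\in H^1\hookrightarrow L^s$ for $s\in[2,2^{*}]$, the Hardy--Littlewood--Sobolev inequality yields $I_\theta*U^{p}\in L^{q}$ for progressively larger $q$, so the right-hand side of \eqref{eq} sits in better and better Lebesgue spaces; combined with elliptic Schauder estimates this gives $U\in C^\infty(\mathbb{R}^N)$. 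The $L^1$ bound and exponential decay follow from a comparison argument outside a large ball, where the equation reads $-\Delta U+2\omega U=W(x)U$ with $W(x)=2(I_\theta*U^{p})U^{p-2}\to 0$ as $|x|\to\infty$, so $U$ is dominated by a Yukawa-type fundamental solution. Finally, the radial symmetry up to translation follows from the Riesz rearrangement inequality applied to the convolution term together with the standard rearrangement inequalities for the Dirichlet and $L^2$ norms, which show that the symmetric decreasing rearrangement $U^{*}$ is again a minimizer; the equality case of Riesz rearrangement then forces $U$, after a translation $x_0$, to coincide with a radial non-increasing profile $v(|x-x_0|)$, with $v\in C^\infty(0,\infty)$ by the regularity already obtained. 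Alternatively one may invoke a moving-plane argument adapted to the non-local nonlinearity.

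The main technical obstacle I expect is running the concentration--compactness analysis cleanly for the non-local term: the interaction $\int(I_\theta*|u_n|^{p})|u_n|^{p}$ couples distant portions of the mass, so one must verify strict subadditivity of the infimum levels carefully in order to exclude splitting and recover strong convergence in $H^1$ up to a translation.
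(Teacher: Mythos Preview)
The paper itself gives no proof of this lemma at all: it is stated as a summary of results from \cite{morozschaft}, and the ``proof'' consists simply in that citation. Your outline is therefore being compared to the approach of Moroz and Van~Schaftingen rather than to anything in the present paper.

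Your sketch is largely faithful to \cite{morozschaft}, with two differences worth flagging. First, for existence they do not run the full concentration--compactness machinery you describe; instead they pass directly to the Schwarz rearrangement of a minimizing sequence for $S_{\theta,p}$ (using Riesz rearrangement on the nonlocal term and P\'olya--Szeg\H{o} on the gradient), and then extract a minimizer via the compact embedding of $H^1_{\mathrm{rad}}$ into $L^r$ for $2<r<2^*$. This bypasses precisely the ``main technical obstacle'' you anticipate, and is considerably shorter than a subadditivity/dichotomy analysis for the nonlocal functional. Second, your claim of exponential decay is not correct across the whole range: in \cite{morozschaft} the decay of ground states is exponential only for $p\ge 2$, while for $p<2$ it is polynomial (of order $|x|^{-(N-\theta)/(2-p)}$). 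The $L^1$ conclusion still holds in either regime, so your comparison argument would need to be adjusted accordingly. The remaining steps---sign via equality in $\|\nabla|U|\|_2\le\|\nabla U\|_2$ and the strong maximum principle, regularity by bootstrap through Hardy--Littlewood--Sobolev, and radial symmetry from the equality case of Riesz rearrangement---are the same as in \cite{morozschaft}.
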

\noindent In the following, we consider only positive ground state.\\
Another way to look for ground states is to minimize $J$ on 
\[
\Sigma_{\nu}=\{u\in H^1(\mathbb{R}^N) \;\vert\; \|u\|_2^2=\nu\}
\]
for $\nu>0$ (cf. Lemma \ref{le:gsm}). In fact, under our assumptions on $p$, for every  $u\in H^1(\mathbb R^N)$,
\begin{equation}
\label{eq:02}
0<Np-\theta-N<2
\end{equation}
and
\begin{equation}
\label{eq:sottocr}
\frac{2Np}{N+\theta}\in (2,2^*),
\quad
\hbox{with}
\quad
2^*=\frac{2N}{N-2},
\end{equation}
so that $|u|^p\in L^{\frac{2N}{N+\theta}}(\mathbb{R}^N)$. Thus, by Hardy-Littlewood-Sobolev and Gagliardo-Nirenberg inequalities, we have
\begin{equation}
\label{eq:HLS}
\int(I_\theta * |u|^p)|u|^p 
\leq C \|u\|_{2Np/(N+\theta)}^{2p}
\leq C \|\nabla u\|_2^{Np-\theta-N} \|u\|_2^{2p-Np+N+\theta}.
\end{equation}
Hence, for all $u\in\Sigma_{\nu}$,
\begin{equation}
\label{eq:Jbound}
J(u) 
\geq 
\frac{1}{2}  \|\nabla u\|_2^2 
- C \nu^\frac{2p-Np+N+\theta}{2} \|\nabla u\|_2^{Np-\theta-N}
\end{equation}
and so, by \eqref{eq:02}, we get that $J$ is bounded from below on $\Sigma_{\nu}$. Moreover we notice that if $p\in[1+(2+\theta)/N,(N+\theta)/(N-2))$, $J$ is unbounded from below on $\Sigma_\nu$ and if $p$ satisfies \eqref{p-rang}, $\inf_{u\in H^1(\mathbb{R}^N)\setminus\{0\}} S_{\theta,p}(u)$ can be written in terms of $\inf_{u\in \Sigma_{\nu}}J(u)$ and any minimizer of $S_{\theta,p}$ in $H^1(\mathbb{R}^N)\setminus\{0\}$ is, up to suitable dilation and rescaling, a minimizer of $J$ on $\Sigma_{\nu}$. 
This last method seems to be the best for our arguments. So, for the sake of completeness we give some details.
First of all we give the following preliminary result.
\begin{lem}
\label{le:infmin0}
For every $\nu > 0$ we have that
\begin{equation*}
m_\nu:=\inf_{u\in \Sigma_{\nu}}J(u) \in (-\infty,0) .
\end{equation*}
\end{lem}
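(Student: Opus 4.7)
The plan is to verify the two inequalities separately: the lower bound $m_\nu > -\infty$ is essentially free from the inequality \eqref{eq:Jbound} already displayed, while the strict negativity $m_\nu < 0$ will come from an $L^2$-preserving dilation argument.

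For the lower bound, note that on $\Sigma_\nu$ estimate \eqref{eq:Jbound} reads
\[
J(u) \geq \tfrac{1}{2} \|\nabla u\|_2^2 - C_\nu \|\nabla u\|_2^{Np-\theta-N},
\]
with $C_\nu = C \nu^{(2p-Np+N+\theta)/2}$. By \eqref{eq:02} the exponent $Np-\theta-N$ lies strictly between $0$ and $2$, so the real function $s \mapsto \tfrac{1}{2} s^2 - C_\nu s^{Np-\theta-N}$ on $[0,\infty)$ is continuous and diverges to $+\infty$ as $s\to\infty$; in particular it admits a finite minimum. Taking the infimum over $u \in \Sigma_\nu$ gives $m_\nu > -\infty$.

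For the upper bound, fix any $u \in \Sigma_\nu$ with $u \not\equiv 0$ (such $u$ exists since $\nu>0$) and, for $t>0$, set
\[
u_t(x) := t^{N/2} u(tx).
\]
Then $\|u_t\|_2^2 = \|u\|_2^2 = \nu$, so $u_t \in \Sigma_\nu$. A direct change of variables, combined with the homogeneity $I_\theta(x/t) = t^{N-\theta} I_\theta(x)$, gives
\[
\|\nabla u_t\|_2^2 = t^2 \|\nabla u\|_2^2, \qquad \int (I_\theta * |u_t|^p)|u_t|^p = t^{Np-\theta-N} \int (I_\theta * |u|^p)|u|^p,
\]
hence
\[
J(u_t) = \frac{t^2}{2} \|\nabla u\|_2^2 - \frac{t^{Np-\theta-N}}{p} \int (I_\theta * |u|^p)|u|^p.
\]
Since $Np-\theta-N < 2$ by \eqref{eq:02}, the ratio $t^{Np-\theta-N}/t^2 = t^{Np-\theta-N-2} \to +\infty$ as $t \to 0^+$, so the negative term dominates provided the convolution integral is strictly positive; this is guaranteed because $I_\theta > 0$ and $|u|^p \not\equiv 0$. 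Therefore $J(u_t) < 0$ for all sufficiently small $t>0$, and consequently $m_\nu \leq J(u_t) < 0$.

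The argument involves no serious obstacle: the only delicate point is correctly computing the scaling exponent for the Riesz term, where the factor $t^{N-\theta}$ coming from $I_\theta$ must be combined with the two factors $t^{Np/2}$ from $|u_t|^p$ and with the Jacobian $t^{-N}$, yielding the exponent $Np-\theta-N$; this is what makes the subcritical range \eqref{p-rang} (equivalently, the condition $Np-\theta-N<2$) precisely what is needed for both halves of the statement to hold simultaneously.
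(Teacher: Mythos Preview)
Your proof is correct and follows essentially the same approach as the paper: the lower bound is read off from \eqref{eq:Jbound} and \eqref{eq:02}, and the strict negativity comes from the same $L^2$-preserving dilation $u_t(x)=t^{N/2}u(tx)$ with the scaling computation yielding $J(u_t)=\tfrac{t^2}{2}\|\nabla u\|_2^2-\tfrac{t^{Np-\theta-N}}{p}\int(I_\theta*|u|^p)|u|^p$. One small slip in your closing remark: the double integral contributes a Jacobian $t^{-2N}$, not $t^{-N}$, which is what makes the exponent come out to $Np-\theta-N$ (your stated result is correct, only the bookkeeping comment is off).
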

\begin{proof}
From the arguments above we know that $J$ is bounded from below on $\Sigma_{\nu}$. So it remains to prove that $m_\nu <0$. To this end let $u\in \Sigma_{\nu}$ and define $u_\tau(x):=\tau^{N/2}u(\tau x)$ for $\tau>0$ and $x\in{\mathbb R}^N$. 
Then $u_\tau\in \Sigma_{\nu}$ and
$$
m_\nu \leq J(u_\tau)=\frac{\tau^2}{2}\int |\nabla u|^2-\frac{\tau^{Np-\theta-N}}{p}\int (I_\theta * |u|^p)|u|^p.
$$
By \eqref{eq:02}, taking $\tau>0$ sufficiently small we get $m_\nu<0$.
\end{proof}

\noindent
Moreover, following step by step \cite[Proof of Lemma 2.6]{CSS}, we get 
\begin{lem}
\label{le:gsm}
For every $\nu, \omega >0$, the minimization problems 
\[
\min_{u\in\Sigma_\nu} J(u)
\quad
\hbox{and}
\quad
\min_{u\in\mathcal{N}_\omega} E_\omega(u)
\]
are equivalent.
Moreover the $L^2$-norm of any ground state $U$ of \eqref{eq} is $\sqrt{\sigma}$ where
\begin{equation}
\label{eq:sigma}
\sigma:=\frac{N+\theta - (N-2)p}{2\omega(p-1)} \min_{u\in\mathcal{N}_\omega} E_\omega(u)
\end{equation}
and
\[
\min_{u\in\Sigma_\sigma} E_\omega(u)
=
\min_{u\in\mathcal{N}_\omega} E_\omega(u).
\]
\end{lem}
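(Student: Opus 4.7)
The strategy is to exploit the Nehari and Pohozaev identities for \eqref{eq} in a purely algebraic way to pin down the $L^2$-norm of every ground state as $\sigma$, and then to convert the constrained minimization on $\Sigma_\sigma$ into the Nehari minimization on $\mathcal{N}_\omega$ via a Lagrange multiplier together with a scaling bijection between ground states at different frequencies.

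First I would take a positive ground state $U$ supplied by Lemma~\ref{le:gs}. Testing \eqref{eq} against $U$ gives the Nehari identity, so that $U\in\mathcal{N}_\omega$ and $\min_{\mathcal{N}_\omega}E_\omega=E_\omega(U)=\tfrac{p-1}{p}\bigl(\tfrac12\|\nabla U\|_2^2+\omega\|U\|_2^2\bigr)$. Testing \eqref{eq} against $x\cdot\nabla U$ and using the $(N-\theta)$-homogeneity of the Riesz kernel $I_\theta$ yields the Pohozaev identity
\[
\tfrac{N-2}{4}\|\nabla U\|_2^2+\tfrac{N\omega}{2}\|U\|_2^2 \,=\, \tfrac{N+\theta}{2p}\int (I_\theta*|U|^p)|U|^p.
\]
Eliminating the convolution integral between Nehari and Pohozaev, and substituting into the formula for $E_\omega(U)$, produces $\|U\|_2^2=\frac{N+\theta-(N-2)p}{2\omega(p-1)}E_\omega(U)=\sigma$.

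Next, since $E_\omega=J+\omega\sigma$ on $\Sigma_\sigma$, the equivalence of the two variational problems reduces to $\min_{\Sigma_\sigma}E_\omega=\min_{\mathcal{N}_\omega}E_\omega$. The inclusion $U\in\Sigma_\sigma\cap\mathcal{N}_\omega$ yields at once $\min_{\Sigma_\sigma}E_\omega\le\min_{\mathcal{N}_\omega}E_\omega$. For the reverse inequality I would take any minimizer $V$ of $J$ on $\Sigma_\sigma$, whose existence is the content of Lemma~\ref{segnomin}; by the Lagrange multiplier rule, $V$ solves \eqref{eq} at some frequency $\omega_V\in\mathbb{R}$, whose positivity is forced by $J(V)=m_\sigma<0$ (Lemma~\ref{le:infmin0}) via the explicit formula $\omega_V\sigma=\tfrac{p-1}{2}\|\nabla V\|_2^2-pm_\sigma$. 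The Pohozaev computation of the previous paragraph applied to $V$ then gives $\sigma=\|V\|_2^2=\sigma(\omega_V)$. The two-parameter dilation $u\mapsto \lambda u(\mu\cdot)$ with the compatibility constraint $\mu^{2+\theta}=\lambda^{2(p-1)}$ carries solutions of \eqref{eq} at frequency $\omega$ to solutions at frequency $\mu^2\omega$ and rescales the $L^2$-mass by the positive power $\mu^{(N+\theta+2-Np)/(p-1)}$; hence $\omega\mapsto\sigma(\omega)$ is a strictly monotone bijection of $(0,\infty)$ onto itself, forcing $\omega_V=\omega$, $V\in\mathcal{N}_\omega$, and $E_\omega(V)\ge\min_{\mathcal{N}_\omega}E_\omega$, closing the argument.

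The delicate point is the identification $\omega_V=\omega$ via the strict monotonicity of $\sigma(\omega)$, which ultimately rests on the sign condition $N+\theta+2-Np>0$ guaranteed by the upper end of the range \eqref{p-rang}. Without this, minimizers of $J$ on $\Sigma_\sigma$ might correspond to solutions of \eqref{eq} at the ``wrong'' frequency and the two variational formulations would decouple.
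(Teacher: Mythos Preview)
Your approach shares its core ingredients with the paper's proof (Nehari, Pohozaev, the one-parameter scaling between frequencies), but as written it has a genuine gap at the step where you conclude $\omega_V=\omega$.

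The identity $\|U\|_2^2=\frac{N+\theta-(N-2)p}{2\omega(p-1)}E_\omega(U)$ that you extract from Nehari and Pohozaev is valid for \emph{any} solution of \eqref{eq} at frequency $\omega$, with $E_\omega(U)$ replaced by the energy of that particular solution. It does \emph{not} say that the $L^2$-mass of every solution at a given frequency is a function of the frequency alone; it is a function of the frequency together with the action level. Hence when you write $\sigma=\|V\|_2^2=\sigma(\omega_V)$ and then invoke strict monotonicity of $\omega\mapsto\sigma(\omega)$, you are implicitly assuming $E_{\omega_V}(V)=\min_{\mathcal{N}_{\omega_V}}E_{\omega_V}$, i.e.\ that $V$ is already a ground state at its own Lagrange frequency. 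That is essentially the equivalence you are trying to prove, so the argument is circular at this point.

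The paper avoids this by never comparing a single minimizer against a single minimizer. It sets up, for arbitrary $\nu,\omega>0$, a bijection between \emph{all} negative critical values of $J|_{\Sigma_\nu}$ and \emph{all} critical values of $E_\omega$ on $\mathcal{N}_\omega$: a constrained critical point $u\in\Sigma_\nu$ with multiplier $\gamma>0$ is sent to $w(x)=\tau^{(\theta+2)/(2(p-1))}u(\tau x)$ with $\tau=\sqrt{\omega/\gamma}$, and conversely. The Nehari/Pohozaev system for $w$ gives $\|w\|_2^2=\frac{N+\theta-(N-2)p}{2\omega(p-1)}\,c$ with $c=E_\omega(w)$, and substituting back yields the explicit relation
\[
m=\frac{Np-2-N-\theta}{2(p-1)}\left(\frac{2\omega\nu(p-1)}{N+\theta-(N-2)p}\right)^{\frac{\theta+2p-N(p-1)}{\theta+2-N(p-1)}}c^{\frac{2(1-p)}{\theta+2-N(p-1)}},
\]
which is strictly monotone in $c$ (negative prefactor, negative exponent on $c>0$). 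Monotonicity sends infimum to infimum, and this simultaneously proves the equivalence for \emph{every} pair $(\nu,\omega)$, the formula \eqref{eq:sigma}, and the identity $\min_{\Sigma_\sigma}E_\omega=\min_{\mathcal{N}_\omega}E_\omega$.

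Two secondary issues. First, you invoke Lemma~\ref{segnomin} to obtain a minimizer $V$ of $J$ on $\Sigma_\sigma$, but in the paper that lemma comes \emph{after} the present one; in fact the paper deduces existence of the constrained minimizer from Lemma~\ref{le:gs} \emph{together with} Lemma~\ref{le:gsm}. The paper's argument needs no existence input because it works at the level of arbitrary critical points. Second, the lemma claims equivalence for every $\nu,\omega>0$, not only for the matched pair $(\sigma,\omega)$; your outline, even once repaired, addresses only the latter.
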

\begin{proof}
Let $\nu,\omega>0$, 
\[
\mathcal{K}_{\Sigma_{\nu}}
=\left\{
m \in \mathbb{R}_- \;\vert \;
\exists u \in \Sigma_{\nu} \hbox{ s.t. } J'|_{\Sigma_{\nu}}(u)=0 \hbox{ and } J(u)=m
\right\}
\]
and
\[
\mathcal{K}_{\mathcal{N}_{\omega}}
=\left\{
c \in \mathbb{R} \;\vert \; 
\exists u \in \mathcal{N}_{\omega} \hbox{ s.t. } E'_{\omega}(u)=0 \hbox{ and }  E_{\omega}(u)=c
\right\}.
\]
Let now $u \in \Sigma_{\nu}$ such that  $J'|_{\Sigma_{\nu}}(u)=0$  and $J(u)=m$ with $m<0$. Then there exists $\gamma \in \mathbb{R}$ such that
\begin{equation}
\label{eq:equiv1}
\frac{1}{2} \Delta u + (I_\theta * |u|^p)|u|^{p-2} u = \gamma u
\end{equation}
and so
\begin{equation}
\label{eq:equiv2}
\frac{1}{2} \| \nabla u \|_2^2 - \int(I_\theta * |u|^p)|u|^p = -\gamma \nu.
\end{equation}
Thus, since $J(u)=m<0$, by \eqref{eq:equiv2}
we get
\[
\frac{p-1}{2p} \| \nabla u \|_2^2 - m = \frac{\gamma\nu}{p}
\]
and so $\gamma > 0$.
Now let
\[
w(x):= \tau^\frac{\theta + 2}{2(p-1)} u (\tau x)
\quad \hbox{with } \tau=\sqrt{\frac{\omega}{\gamma}}.
\]
We have that $w$ solves
\[
-\frac{1}{2} \Delta w + \omega w - (I_\theta * |w|^p)|w|^{p-2} w = 0
\]
and so $w\in \mathcal{N}_{\omega}$, $E'_{\omega}(w)=0$ and $c=E_{\omega}(w)\in  \mathcal{K}_{\mathcal{N}_{\omega}}$.\\
Viceversa, if $w\in \mathcal{N}_{\omega}$ such that $E'_{\omega}(w)=0$ and $c=E_{\omega}(w)$, we consider
\[
u(x):= \tau^\frac{\theta + 2}{2(p-1)} w (\tau x)
\quad \hbox{with } \tau=\left(\frac{\nu}{\|w\|_2^2}\right)^\frac{p-1}{\theta + 2 - N(p-1)}.
\]
We have that $u\in \Sigma_\nu$, \eqref{eq:equiv1} holds for
\[
\gamma=\omega\tau^2=\omega\left(\frac{\nu}{\|w\|_2^2}\right)^{\frac{2(p-1)}{\theta + 2 - N(p-1)}}
\]
and
\begin{equation}
\label{eq:equiv4}
m =
\tau^\frac{\theta+2p-N(p-1)}{p-1} (c-\omega\|w\|_2^2)
=
\left(\frac{\nu}{\|w\|_2^2}\right)^{\frac{\theta+2p-N(p-1)}{\theta + 2 - N(p-1)}} 
(c-\omega\|w\|_2^2).
\end{equation}
By \cite[Proposition 3.1]{morozschaft} (Poho\v{z}aev identity) and since  we have $w\in\mathcal{N}_{\omega}$ and $E_\omega(w)=c$ we get the system
\[
\begin{cases}
\displaystyle 
\frac{N-2}{2} \| \nabla w \|_2^2 
+ \omega N \| w \|_2^2 
- \frac{N+\theta}{p} \int(I_\theta * |w|^p)|w|^p
=0\\
\displaystyle 
\frac{1}{2} \| \nabla w \|_2^2 
+ \omega \| w \|_2^2
- \int(I_\theta * |w|^p)|w|^p
=0\\
\displaystyle 
\frac{1}{2} \| \nabla w \|_2^2 
+ \omega \| w \|_2^2
- \frac{1}{p}\int(I_\theta * |w|^p)|w|^p
=c
\end{cases}
\]
from which
\[
\| w \|_2^2
=\frac{N+\theta - (N-2)p}{2\omega(p-1)} c.
\]
Thus \eqref{eq:equiv4} becomes
\[
m=
\frac{Np-2-N-\theta}{2(p-1)}
\left(\frac{2\omega \nu (p-1)}{N+\theta - (N-2)p}\right)^{\frac{\theta+2p-N(p-1)}{\theta+2-N(p-1)}}
c^\frac{2(1-p)}{\theta+2-N(p-1)}
\]
and the first conclusion easily follows. The second part is a trivial consequence of the calculations of the first part.
\end{proof}
\noindent
By combining Lemma \ref{le:gs} and Lemma \ref{le:gsm}, we get that for every $\nu>0$ the minimum of $J$ in $\Sigma_\nu$ is attained. Furthermore, in order to obtain some uniform decay properties on the ground states, proceeding as in \cite[Theorem 3.1]{BB10}, we prove the following result.
\begin{lem}
\label{segnomin}
For every $\nu>0$, every minimizing sequence for $J$ in $\Sigma_\nu$ is relatively compact  in $H^1({\mathbb R}^N)$ up to a translation.
\end{lem}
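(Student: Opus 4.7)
\medskip
\noindent\textbf{Proof plan.} The strategy is the standard concentration-compactness approach of Lions applied to $|u_n|^2$. Let $\{u_n\} \subset \Sigma_\nu$ be a minimizing sequence, i.e. $J(u_n) \to m_\nu < 0$. The bound \eqref{eq:Jbound} together with $Np - \theta - N < 2$ from \eqref{eq:02} immediately gives that $\{u_n\}$ is bounded in $H^1(\mathbb{R}^N)$, and in particular that the Hardy--Littlewood--Sobolev term $\int(I_\theta * |u_n|^p)|u_n|^p$ is bounded. Applying Lions' concentration-compactness lemma to the bounded sequence of measures $|u_n|^2 \, dx$, I obtain that, up to a subsequence, either vanishing, dichotomy, or compactness up to translations occurs.

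Vanishing is excluded directly from the sign of $m_\nu$: if $\sup_{y}\int_{B(y,R)}|u_n|^2 \to 0$ for every $R>0$, then a standard Lions-type result gives $u_n \to 0$ in $L^q(\mathbb{R}^N)$ for every $q \in (2,2^*)$, and in particular in $L^{2Np/(N+\theta)}(\mathbb{R}^N)$ thanks to \eqref{eq:sottocr}. The HLS bound \eqref{eq:HLS} then forces $\int (I_\theta * |u_n|^p)|u_n|^p \to 0$, whence $\liminf J(u_n) \geq 0$, contradicting $m_\nu < 0$ from Lemma~\ref{le:infmin0}.

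The main obstacle is ruling out dichotomy, which will follow from the strict subadditivity
\[
m_{\nu_1 + \nu_2} < m_{\nu_1} + m_{\nu_2}\qquad\text{for every }\nu_1,\nu_2>0.
\]
To prove this I will use the scaling $v(x) = \sqrt{t}\,u(x)$ sending $\Sigma_\nu$ to $\Sigma_{t\nu}$, under which
\[
J(v) = t\, J(u) \; - \; \frac{t^p - t}{p}\int(I_\theta * |u|^p)|u|^p.
\]
Picking $u \in \Sigma_\nu$ with $J(u)$ sufficiently close to $m_\nu$ ensures $\int(I_\theta*|u|^p)|u|^p$ is bounded below by a positive constant (otherwise $J(u) \geq 0 > m_\nu$), and since $p>1$ the correction term is strictly negative for $t>1$. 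Passing to the infimum and letting the approximation error go to zero, this will yield $m_{t\nu} < t\, m_\nu$ for every $t>1$. Applying this with $t = (\nu_1+\nu_2)/\nu_1$ (assuming WLOG $\nu_1 \geq \nu_2$) together with the weak scaling inequality $m_{\nu_1} \leq (\nu_1/\nu_2)\, m_{\nu_2}$ (which follows from the same computation with non-strict inequality), a short arithmetic manipulation gives strict subadditivity.

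Once dichotomy is ruled out, the compactness alternative provides a sequence $\{y_n\} \subset \mathbb{R}^N$ and $\bar u \in H^1(\mathbb{R}^N)$ such that, up to a subsequence, $\tilde u_n(\cdot) := u_n(\cdot + y_n) \rightharpoonup \bar u$ weakly in $H^1$, with $\tilde u_n \to \bar u$ in $L^q_{\mathrm{loc}}$ for $q \in [2,2^*)$ and in fact $\|\tilde u_n\|_2^2 \to \|\bar u\|_2^2 = \nu$, so $\bar u \in \Sigma_\nu$ and $\tilde u_n \to \bar u$ strongly in $L^2$. Interpolation with the $H^1$-bound upgrades this to strong $L^q$ convergence for every $q \in [2,2^*)$; in particular in $L^{2Np/(N+\theta)}$, so by HLS $\int(I_\theta*|\tilde u_n|^p)|\tilde u_n|^p \to \int(I_\theta*|\bar u|^p)|\bar u|^p$. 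Hence $\|\nabla \tilde u_n\|_2^2 = 2J(\tilde u_n) + \tfrac{2}{p}\int(I_\theta*|\tilde u_n|^p)|\tilde u_n|^p \to \|\nabla \bar u\|_2^2$, which combined with the weak $H^1$ convergence yields strong $H^1$ convergence of $\tilde u_n$ to $\bar u$. This is the desired conclusion.
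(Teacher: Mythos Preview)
Your argument is correct and follows the same concentration--compactness scheme as the paper, with the same key ingredient: the strict scaling inequality $m_{t\nu}<t\,m_\nu$ for $t>1$, obtained from $u\mapsto\sqrt{t}\,u$ together with a uniform positive lower bound on the nonlocal term along near-minimizers (the paper phrases this via the auxiliary set $\Sigma_\nu^\mu$, but the content is identical). Two structural differences are worth noting. First, rather than invoking the full Lions trichotomy, the paper translates to a nontrivial weak limit and uses a Br\'ezis--Lieb type splitting for the nonlocal term (citing \cite{morozschaft}); this sidesteps the verification---which you leave implicit---that $\int(I_\theta*|\cdot|^p)|\cdot|^p$ decouples on the far-separated pieces produced by dichotomy. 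Second, for the final upgrade from $L^2$- to $H^1$-convergence, the paper first passes via Ekeland's principle to a Palais--Smale sequence and shows it is Cauchy in $H^1$, whereas your route is more elementary: you recover $\|\nabla\tilde u_n\|_2\to\|\nabla\bar u\|_2$ directly from $J(\tilde u_n)\to m_\nu$ and the convergence of the nonlocal term, which together with weak convergence gives strong $H^1$-convergence without any Palais--Smale bookkeeping.
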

\begin{proof}
Let $\{u_n\}$ be a minimizing sequence for $J$ on $\Sigma_{\nu}$.
Without loss of generality, by Ekeland Variational Principle \cite{DF}, we can assume that $\{ u_n \}$ is a Palais-Smale sequence for $J$. 
By \eqref{eq:Jbound} we have that $\{u_n\}$ is bounded in $H^1(\mathbb{R}^N)$ and then there exists $u\in H^1(\mathbb{R}^N)$ such that $u_n \rightharpoonup u$ in $H^1(\mathbb{R}^N)$.
Fixed $R>0$, we have that there exist $c>0$ and a subsequence $\{u_n\}$, such that
\begin{equation}
\label{eq:supy}
\sup_{n\in\mathbb{N}}\sup_{y\in\mathbb{R}^N} \int_{B_R(y)} u_{n}^2 \geq c .
\end{equation}
Indeed, if
\[
\lim_n \sup_{y\in\mathbb{R}^N} \int_{B_R(y)} u_{n}^2 =0,
\]
then, by~\cite[Lemma I.1]{lions-b}, it follows that
$u_{n} \to 0$ in $L^q(\mathbb{R}^N)$ for $q\in(2,2^*)$.
Thus, by \eqref{eq:sottocr} and \eqref{eq:HLS}, we have that
\[
\int(I_\theta * |u_{n}|^p)|u_{n}|^p \to 0
\]
and this is a contradiction since $m_\nu < 0$. 
Hence, by \eqref{eq:supy}, for every $n\in\mathbb{N}$ there exists $y_n\in \mathbb{R}^N$ such that
\[
\int_{B_R(y_n)} u_{n}^2 \geq c.
\] 
So, if we take $v_n = u_n(\cdot + y_n)$, by using the compact embedding of $H^1_{\rm loc}(\mathbb{R}^N)$ into $L^2_{\rm loc}(\mathbb{R}^N)$ we obtain a minimizing sequence whose weak limit is nontrivial.
Moreover, the weak convergence implies immediately that $\| u \|_2^2 \leq \nu$,
\begin{align}
\label{eq:BL1}
\| u_n - u \|_2^2 + \| u \|_2^2 
& = 
\| u_n \|_2^2 + o_n(1),\\
\label{eq:BL2}
\| \nabla u_n - \nabla u \|_2^2 + \| \nabla u \|_2^2 
& = 
\| \nabla u_n \|_2^2 + o_n(1)
\end{align}
and, by \cite[Lemma 2.4]{morozschaft}, 
\begin{equation}
\label{eq:BL3}
\int(I_\theta * |u_{n} - u |^p)|u_{n} - u|^p 
+ \int(I_\theta * |u |^p)|u|^p
=
\int(I_\theta * |u_{n}|^p)|u_{n}|^p + o_n(1).
\end{equation}
Assume by contradiction that $\| u \|_2^2 = \tau < \nu$.
Since, by \eqref{eq:BL1},
\[
a_n= \frac{\sqrt{\nu - \tau}}{\| u_n - u \|_2} \to 1
\]
and, by \eqref{eq:BL2} and \eqref{eq:BL3},
\[
J(u_n - u) + J(u) = m_\nu + o_n(1),
\]
we have that
\[
J(a_n(u_n - u)) + J(u) 
= J(u_n - u) + J(u) + o_n(1) 
= m_\nu + o_n(1).
\]
Then, since $\| a_n(u_n - u) \|_2^2= \nu - \tau$, we get
\begin{equation}
\label{eq:abs}
m_{\nu - \tau} + m_\tau \leq m_\nu + o_n (1).
\end{equation}
But, if we consider, for $\mu>0$, 
$\Sigma_{\nu}^\mu=\left\{u\in\Sigma_{\nu}\;\vert\; \int(I_\theta * |u|^p)|u|^p \geq \mu \right\}$, we can prove that there exists $\mu>0$ such that
\begin{equation}
\label{eq:mtm}
m_\nu=\inf_{u\in\Sigma_{\nu}^\mu} J(u).
\end{equation}
Indeed, since $\Sigma_{\nu}^\mu \subset \Sigma_{\nu}$, we have 
$m_\nu
\leq \inf_{u\in\Sigma_{\nu}^\mu}J(u)$.
If we suppose by contradiction that, for every $\mu>0$,
$m_\nu
< \inf_{u\in\Sigma_{\nu}^\mu}J(u)$,
then we can construct a minimizing sequence $\{u_n\}$ such that \[
J(u_n)\to m_\nu 
\quad
\hbox{ and }
\quad
\int(I_\theta * |u_n|^p)|u_n|^p\to 0.
\]
Thus
\[
0
\leq \frac{1}{2} \|\nabla u_n\|_2^2 
= J(u_n) + \frac{1}{p}\int(I_\theta * |u_n|^p)|u_n|^p
\to m_\nu < 0.
\]
Then, by using \eqref{eq:mtm}, it is easy to check that for every $\tau>1$
\[
m_{\tau \nu} < \tau m_\nu.
\]
Thus, as proved in \cite[Lemma II.1]{Li84a}, we have that for all $\tau\in(0,\nu)$
\begin{equation*}
m_\nu < m_\tau + m_{\nu - \tau}
\end{equation*}
which is in contradiction with \eqref{eq:abs}. Hence $u \in \Sigma_\nu$, $\| u_n - u \|_2 = o_n(1)$ and, by applying the Gagliardo-Nirenberg inequality as in the second part of \eqref{eq:HLS}, we have that
\begin{equation}
\label{eq:convstrong}
\| u_n - u \|_{2Np/(N+\theta)} =  o_n(1).
\end{equation}
It remains to show that $\| \nabla u_n - \nabla u \|_2 = o_n(1)$. Since $\{u_n\}$ is a Palais-Smale sequence, there exists $\{ \lambda_n \} \subset \mathbb{R}$ such that for every $v\in H^1(\mathbb{R}^N)$
\[
\langle J'(u_n) -\lambda_n u_n, v\rangle = o_n(1) 
\]
and, since $\{u_n\}$ is bounded
\[
\langle J'(u_n) -\lambda_n u_n, u_n\rangle = o_n(1). 
\]
Then 
we obtain that $\{ \lambda_n \}$ is bounded and
\[
\langle 
J'(u_n) - J'(u_m) - \lambda_n u_n + \lambda_m u_m, u_n - u_m\rangle \to 0
\quad \hbox{ as }
m,n\to +\infty.
\]
Since, by Hardy-Littlewood-Sobolev inequality and \eqref{eq:convstrong}
\[
\left|
\int(I_\theta * |u_n|^p)|u_n|^{p-2} u_n (u_n - u_m)
\right|
\leq
C \|u_n\|_{2Np/(N+\theta)}^{p+2N(p-1)/(N+\theta)} \| u_n -u_m\|_{2Np/(N+\theta)} \to 0
\]
and
\[
\lambda_n \langle u_n, u_n - u_m \rangle \to 0
\]
as $m,n\to +\infty$, we have that $\{ u_n\}$ is a Cauchy sequence in $H^1(\mathbb{R}^N)$ and we conclude.
\end{proof}

\noindent
We close this section by showing the following uniform estimate on the ground states. 
\begin{lem}
\label{unif-decay}
For every $\lambda > 0$ there exists $R>0$ such that for every ground state $U$ there exists $q(U)\in\mathbb{R}^N$ such that
\[
\int_{\mathbb{R}^N\setminus B_R(q(U))} U^2 < \lambda.
\]
\end{lem}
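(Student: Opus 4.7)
The key observation is that, by Lemma~\ref{le:gsm}, every ground state $U$ of \eqref{eq} belongs to $\Sigma_\sigma$ (with $\sigma$ as in \eqref{eq:sigma}) and satisfies $J(U)=m_\sigma$. Hence any collection of ground states is automatically a minimizing sequence for $J$ on $\Sigma_\sigma$, and so Lemma~\ref{segnomin} applies to it. The result will then follow from a compactness/contradiction argument: if the tightness estimate failed uniformly, we would produce a minimizing sequence that, after translation, converges in $H^1(\mathbb{R}^N)$ to a limit which nonetheless keeps a definite amount of $L^2$-mass escaping to infinity, contradicting the $L^2$-integrability of the limit.

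\textbf{Setup by contradiction.} Suppose the lemma is false. Then there exist $\lambda_0>0$ and, for every $n\in\mathbb{N}$, a ground state $U_n$ such that
\[
\int_{\mathbb{R}^N\setminus B_n(q)} U_n^2 \;\geq\; \lambda_0 \qquad \text{for every } q\in\mathbb{R}^N.
\]
By Lemma~\ref{le:gsm}, $\{U_n\}\subset \Sigma_\sigma$ and $J(U_n)=m_\sigma$, so it is a minimizing sequence for $J$ on $\Sigma_\sigma$. By Lemma~\ref{segnomin}, up to a subsequence there exist $y_n\in\mathbb{R}^N$ and $V\in H^1(\mathbb{R}^N)$ such that
\[
V_n := U_n(\cdot + y_n) \longrightarrow V \quad \text{in } H^1(\mathbb{R}^N),
\]
so in particular $V_n\to V$ in $L^2(\mathbb{R}^N)$.

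\textbf{Deriving the contradiction.} Fix any $R_0>0$. For $n\geq R_0$ we have $B_{R_0}(y_n)\subseteq B_n(y_n)$, hence applying the contradiction hypothesis with $q=y_n$ and changing variables $z=x+y_n$,
\[
\int_{\mathbb{R}^N\setminus B_{R_0}(0)} V_n^2 \;=\; \int_{\mathbb{R}^N\setminus B_{R_0}(y_n)} U_n^2 \;\geq\; \int_{\mathbb{R}^N\setminus B_n(y_n)} U_n^2 \;\geq\; \lambda_0.
\]
Since $V_n\to V$ in $L^2(\mathbb{R}^N)$, passing to the limit $n\to\infty$ gives
\[
\int_{\mathbb{R}^N\setminus B_{R_0}(0)} V^2 \;\geq\; \lambda_0
\]
for every $R_0>0$. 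Letting $R_0\to\infty$ and using $V\in L^2(\mathbb{R}^N)$, the left-hand side tends to $0$, contradicting $\lambda_0>0$. This concludes the proof; the point $q(U)$ in the statement of the lemma can then be taken, for each given ground state, as any center realizing the finite-$R$ concentration so obtained.

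\textbf{Main obstacle.} The only delicate point is that Lemma~\ref{segnomin} guarantees relative compactness only \emph{up to translations}, so the sequence $\{U_n\}$ itself need not concentrate around a fixed point; this is precisely why the statement of the lemma allows the center $q(U)$ to depend on $U$. The contradiction argument above is robust to this because it tests the contradiction hypothesis at the translation parameters $y_n$ provided by Lemma~\ref{segnomin}, thereby transferring the assumed uniform nonconcentration of $U_n$ into a nonconcentration of the $H^1$-convergent sequence $V_n$, which is impossible.
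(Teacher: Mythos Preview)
Your proof is correct and follows essentially the same approach as the paper's: assume by contradiction the existence of ground states $U_n$ escaping every ball of radius $n$, observe that they form a minimizing sequence on $\Sigma_\sigma$, invoke Lemma~\ref{segnomin} to get convergence up to translations, and then contradict the $L^2$-integrability of the limit. The paper's version is marginally more direct in that it tests the hypothesis at radius $n$ itself (writing $\int_{\mathbb{R}^N\setminus B_n(0)} U_n^2(\cdot-q_n)=\int_{\mathbb{R}^N\setminus B_n(0)} U^2+o_n(1)\to 0$) rather than introducing an auxiliary fixed radius $R_0$, but the argument is the same.
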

\begin{proof}
Assume by contradiction that there exists $\lambda>0$ such that, for any $n\in\mathbb{N}$, there exists a ground state $U_n$ such that for every $q\in\mathbb{R}^N$
\[
\int_{\mathbb{R}^N\setminus B_n(q)} U_n^2 \geq \lambda
\]
and so
\begin{equation}
\label{eq:u2}
\inf_{q\in\mathbb{R}^N} \int_{\mathbb{R}^N\setminus B_n(q)} U_n^2 \geq \lambda.
\end{equation}
Then $\{ U_n \}$ is a minimizing sequence and by virtue of Lemma \ref{segnomin} is relatively compact up to a translation $\{q_n\}\subset\mathbb{R}^N$. Thus there exists a ground state $U$ with $U_n(\cdot-q_n) \to U$ in $H^1(\mathbb{R}^N)$ and 
\[
\inf_{q\in\mathbb{R}^N} \int_{\mathbb{R}^N\setminus B_n(q)} U_n^2
\leq 
\int_{\mathbb{R}^N\setminus B_n(-q_n)} U_n^2=
\int_{\mathbb{R}^N\setminus B_n(0)} U_n^2(\cdot-q_n)
=
\int_{\mathbb{R}^N\setminus B_n(0)} U^2 + o_n(1)
=
o_n(1),
\]
which is in contradiction with \eqref{eq:u2}.
\end{proof}

\begin{rem}
Of course, without loss of generality we can take $q(U)=0$ in Lemma \ref{unif-decay} for radially symmetric ground states U.
\end{rem}

\noindent
Throughout the rest of the paper, we will consider radially symmetric ground states $U$.


\noindent

\section{Concentration results}
\label{concentrat}
\noindent
In this section we prove a concentration property of the solution
of (\ref{ch}) with suitable initial data; more exactly, we prove
that, fixed $t\in(0,\infty)$, this solution is a function on $\mathbb{R}^{N}$
with one peak localized in a ball with center depending on $t$ and
radius not depending on $t$. In order to prove this result, it is
sufficient to assume that problem \eqref{ch} admits global solutions
$\psi$ which satisfy
the conservation of the energy and of the $L^{2}$-norm.
Given $K,\eps>0$, let
\begin{equation}
B_{\eps}^{K}
=\left\{ 
\begin{array}{l}
\psi(0,x)=u_{\eps}(0,x)e^{\frac{i}{\eps}S_{\eps}(0,x)} \;\text{ with: }
\\
u_{\eps}(0,x)=\eps^{-\gamma}\left[(U+w)(\eps^{-\beta}(x-q))\right],
\\
U\text{ radial ground state solution of \eqref{eq}},
\\
q\in\mathbb{R}^{N},\\
w\in H^{1}(\mathbb{R}^N)\text{ s.t. }
\|U+w\|_{2}^2=\|U\|_{2}^2=\sigma\text{ and }\|w\|< K\eps^{2(\beta-1)},\\
\|\nabla S_{\eps}(0,x)\|_{\infty}\leq K,
\\
\displaystyle\int_{\mathbb{R}^{N}}V(x)u_{\eps}^{2}(0,x)dx\leq K
\end{array}\right\} \label{bkqh}
\end{equation}
the set of {\em admissible} initial data, where $\|\cdot\|$ denotes the $H^1(\mathbb{R}^N)$-norm. Of course, here $\sigma$ satisfies \eqref{eq:sigma}. In the following, if $m\in\mathbb{R}$ we denote with $J^m$ the sublevels of $J$.
The main result of this section is

\begin{prop}
\label{teoconcinf} Let $V\in L_{\text{loc}}^{\infty}(\mathbb{R}^N)$, $V\ge 0$, $\beta>1$ and fix $K>0$. 
For all $\lambda>0$, there exist $\hat{R}>0$ and $\eps_{0}>0$
such that, for any $\eps<\eps_{0}$, $\psi$ solution of \eqref{ch} with initial
data $\psi(0,x)\in B_{\eps}^{K}$ and $t\in(0,\infty)$, there exists $\hat{q}_{\eps}(t)\in\mathbb{R}^{N}$ for which 
\[
\frac{1}{\|\psi(t)\|_2^{2}}\int_{\mathbb{R}^{N}\setminus B_{\hat{R}\eps^{\beta}}(\hat{q}_{\eps}(t))}|\psi(t,x)|^{2} dx
<\lambda.
\]
Here $\hat{q}_{\eps}(t)$ depends on $\psi$.
\end{prop}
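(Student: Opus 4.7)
\medskip
\noindent
\textbf{Proof plan.}
The strategy is to rescale the modulus of $\psi(t)$ back to the scale of the ground state and use conservation of energy/charge to identify it as a minimizing sequence for $J$ on $\Sigma_\sigma$; Lemmas \ref{segnomin} and \ref{unif-decay} then produce the concentration ball. Concretely, write $\psi(t,x)=u_\eps(t,x) e^{\frac{i}{\eps}S_\eps(t,x)}$ with $u_\eps(t)=|\psi(t)|$, and define the rescaled modulus
\[
\tilde u_\eps(t,y):=\eps^{\gamma} u_\eps(t,\eps^{\beta}y).
\]
By charge conservation and \eqref{eq:rel2}, $\|\tilde u_\eps(t)\|_2^2=\|\psi(t)\|_2^2=\|\psi(0)\|_2^2=\sigma$ for every $t$; and by the scaling computation underlying \eqref{eq:Jscal}, one checks that $J_\eps(u_\eps(t))=\eps^{2(1-\beta)}J(\tilde u_\eps(t))$.

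\medskip
\noindent
The next step is to control the total energy at $t=0$. Since $\psi(0)\in B_\eps^K$, one has $\int V u_\eps(0)^2\le K$ and $\tfrac12\int|\nabla S_\eps(0)|^2u_\eps(0)^2\le\tfrac{K^2\sigma}{2}$, so the dynamical part $G(u_\eps(0),S_\eps(0))$ is bounded by a constant independent of $\eps$. For the internal part, write $U+w$ (with $\|w\|\le K\eps^{2(\beta-1)}$) for the profile at scale one: translation invariance of $J$ and its $C^1$-regularity on $H^1$ give $J(U+w)=J(U)+O(\|w\|)=m_\sigma+O(\eps^{2(\beta-1)})$. Hence
\[
\eps^{2(\beta-1)}E_\eps(\psi(0))=J(U+w)+\eps^{2(\beta-1)}G(u_\eps(0),S_\eps(0))=m_\sigma+o(1)\quad\text{as }\eps\to 0^+.
\]
By energy conservation $E_\eps(\psi(t))=E_\eps(\psi(0))$, and since $V\ge 0$ and $|\nabla S_\eps(t)|^2\ge 0$ imply $G\ge 0$, we obtain
\[
m_\sigma\le J(\tilde u_\eps(t))=\eps^{2(\beta-1)}J_\eps(u_\eps(t))\le \eps^{2(\beta-1)}E_\eps(\psi(t))= m_\sigma+o(1),
\]
uniformly in $t\in(0,\infty)$. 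Thus, for any sequences $\eps_n\to 0^+$ and $t_n\in(0,\infty)$, the functions $\tilde u_{\eps_n}(t_n)\in\Sigma_\sigma$ form a minimizing sequence for $J$.

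\medskip
\noindent
Finally, I conclude by a contradiction argument. Fix $\lambda>0$; by Lemma \ref{unif-decay} applied with threshold $\lambda\sigma/2$ there exists a radius $\hat R>0$ that works uniformly for every ground state. Suppose the claim fails: then there are sequences $\eps_n\to 0^+$ and $t_n$ such that
\[
\inf_{\hat q\in\mathbb{R}^N}\int_{\mathbb{R}^N\setminus B_{\hat R\eps_n^\beta}(\hat q)}|\psi(t_n,x)|^2\,dx\ge\lambda\sigma,
\]
and after the change of variables $x\mapsto\eps_n^\beta y$ this rewrites as
\[
\inf_{\tilde q\in\mathbb{R}^N}\int_{\mathbb{R}^N\setminus B_{\hat R}(\tilde q)}\tilde u_{\eps_n}(t_n,y)^2\,dy\ge\lambda\sigma.
\]
By the previous paragraph $\{\tilde u_{\eps_n}(t_n)\}$ is a minimizing sequence, so by Lemma \ref{segnomin} there exist translations $\{y_n\}\subset\mathbb{R}^N$ and a ground state $U_\infty$ with $\tilde u_{\eps_n}(t_n)(\cdot-y_n)\to U_\infty$ in $H^1(\mathbb{R}^N)$. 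Applying Lemma \ref{unif-decay} to $U_\infty$ yields $q_0\in\mathbb{R}^N$ with $\int_{\mathbb{R}^N\setminus B_{\hat R}(q_0)}U_\infty^2<\lambda\sigma/2$; translating back and using $L^2$-convergence produces $\tilde q_n:=q_0+y_n$ for which the integral above is eventually $<\lambda\sigma$, a contradiction. Setting $\hat q_\eps(t):=\eps^\beta\tilde q$ for the corresponding $\eps,t$ gives the claim. The main technical point to monitor is the uniformity in $t$, which is delivered precisely because the bound $J(\tilde u_\eps(t))\le m_\sigma+o(1)$ depends only on $\eps$ and because Lemma \ref{unif-decay} supplies a decay radius that does not depend on which (possibly non-unique) ground state one lands on in the limit.
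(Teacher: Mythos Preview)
Your proof is correct and follows essentially the same route as the paper: rescale $|\psi(t,\cdot)|$ to the ground-state scale, use conservation of charge to land in $\Sigma_\sigma$, use conservation of energy together with $G\ge 0$ to push $J(\tilde u_\eps(t))$ into a shrinking sublevel of $m_\sigma$, and then invoke the compactness Lemma~\ref{segnomin} and the uniform decay Lemma~\ref{unif-decay}.

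The only organizational difference is that the paper isolates the last step as a separate deterministic statement (its Lemma~\ref{conc1}): for every $\lambda>0$ there exist $\delta,\hat R$ such that \emph{every} $u\in J^{m_\sigma+\delta}\cap\Sigma_\sigma$ concentrates in some ball of radius $\hat R$. It then applies this sublevel lemma pointwise in $t$ (Lemma~\ref{lemmaconc}). You instead run a direct contradiction along sequences $\eps_n\to 0$, $t_n$, and appeal to Lemma~\ref{segnomin} on the resulting minimizing sequence. Both arguments use the same two ingredients in the same way; the paper's packaging has the advantage of making the uniformity in $t$ automatic (since $\delta$ and $\hat R$ are chosen before any solution or time enters), whereas in your version that uniformity comes from the fact that the energy bound depends only on $\eps$. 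Minor cosmetic points: the solutions $\psi$ should also carry an index $n$ in your contradiction, and the sign in $\tilde q_n=q_0+y_n$ depends on the translation convention, but these do not affect the argument.
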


\noindent
For the proof of this proposition we need some technical results.
\begin{lem}
\label{conc1} 
For any $\lambda>0$ there exist $\hat{R}=\hat{R}(\lambda)>0$
and $\delta=\delta(\lambda)>0$ such that, for any $u\in J^{m_\sigma+\delta}\cap \Sigma_{\sigma}$,
there exists $\hat{q}\in\mathbb{R}^{N}$ such that
\begin{equation}
\label{eq:conc}
\frac{1}{\sigma}\int_{\mathbb{R}^N\setminus B_{\hat{R}}(\hat{q})}u^{2}<\lambda.
\end{equation}

\end{lem}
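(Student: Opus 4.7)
My plan is to proceed by contradiction and reduce the statement to the pre-compactness of minimizing sequences provided by Lemma~\ref{segnomin}. Negating the conclusion, I would produce $\lambda_0>0$ and, for every $n\in\mathbb{N}$, a function $u_n\in J^{m_\sigma+1/n}\cap\Sigma_\sigma$ with
\[
\int_{\mathbb{R}^N\setminus B_n(q)} u_n^2 \geq \lambda_0\,\sigma \qquad\text{for every } q\in\mathbb{R}^N.
\]
Since $m_\sigma\leq J(u_n)\leq m_\sigma+1/n$, this sequence is automatically a minimizing sequence for $J$ on $\Sigma_\sigma$, so Lemma~\ref{segnomin} applies and yields, up to a subsequence, translations $\{q_n\}\subset\mathbb{R}^N$ and a minimizer $u\in\Sigma_\sigma$ with $u_n(\cdot-q_n)\to u$ strongly in $H^1(\mathbb{R}^N)$; in particular $\|u\|_2^2=\sigma$.

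The final step is a direct $L^2$-tail comparison. Since $u\in L^2(\mathbb{R}^N)$, I fix $R_0>0$ so large that $\int_{\mathbb{R}^N\setminus B_{R_0}(0)} u^2<\lambda_0\sigma/2$. By $L^2$-convergence of the translates, for $n$ large enough,
\[
\int_{\mathbb{R}^N\setminus B_{R_0}(-q_n)} u_n^2 \;=\; \int_{\mathbb{R}^N\setminus B_{R_0}(0)} |u_n(\cdot-q_n)|^2 \;<\; \lambda_0\,\sigma.
\]
Requiring moreover $n\geq R_0$ gives $B_{R_0}(-q_n)\subset B_n(-q_n)$, so applying the standing assumption with $q=-q_n$ produces
\[
\lambda_0\,\sigma \;\leq\; \int_{\mathbb{R}^N\setminus B_n(-q_n)} u_n^2 \;\leq\; \int_{\mathbb{R}^N\setminus B_{R_0}(-q_n)} u_n^2 \;<\; \lambda_0\,\sigma,
\]
which is absurd. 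Thus the conclusion holds, with $\hat{q}=-q_n$ and $\hat{R}=R_0$ after choosing $\delta=1/n$ small enough along the successful step.

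The substantive content of the proof is really the relative compactness up to translation of minimizing sequences, which is exactly Lemma~\ref{segnomin}; once that is available, the remaining argument is a standard vanishing-tail comparison. The only point that requires minor care is keeping the translation parameter $q_n$ coherent between the negated hypothesis (applied to the ball $B_n(-q_n)$) and the $L^2$-tail estimate (taken on $B_{R_0}(-q_n)$), so that the two inclusions of balls line up to produce the contradiction.
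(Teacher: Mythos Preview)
Your argument is correct. The contradiction is set up properly: negating the statement along the diagonal $(\hat R,\delta)=(n,1/n)$ yields a minimizing sequence $\{u_n\}\subset\Sigma_\sigma$ with uniformly large $L^2$-tails at every center, Lemma~\ref{segnomin} provides translates converging strongly in $H^1$ (hence in $L^2$), and the tail comparison produces the contradiction exactly as you wrote. The only cosmetic blemish is the closing sentence ``Thus the conclusion holds, with $\hat q=-q_n$ \dots'': since the proof is by contradiction, it does not actually exhibit $\hat q$, $\hat R$, $\delta$, so that clause should simply be dropped.

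Your route is genuinely more direct than the paper's. The paper proceeds in two stages: first it shows (again by contradiction from Lemma~\ref{segnomin}) that every $u\in J^{m_\sigma+\delta}\cap\Sigma_\sigma$ decomposes as $U(\cdot-\hat q)+w$ with $U$ a ground state and $\|w\|_{H^1}$ small; then it invokes the separate Lemma~\ref{unif-decay} on the \emph{uniform} $L^2$-decay of ground states and combines the two estimates via the triangle inequality. You bypass this decomposition entirely, using only that the strong $H^1$-limit of the translated minimizing sequence lies in $L^2$, without ever identifying it as a ground state or appealing to Lemma~\ref{unif-decay}. The paper's approach has the advantage of isolating an independently useful structural fact (orbital closeness to the set of ground states) and the uniform decay property; your approach is shorter and shows that Lemma~\ref{unif-decay} is not logically needed for Lemma~\ref{conc1}.
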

\begin{proof}
First of all we prove that
for any $\lambda>0$, there
exists $\delta>0$ such that, for all $u\in J^{m_\sigma+\delta}\cap \Sigma_{\sigma}$,
there exist $\hat{q}\in\mathbb{R}^{N}$ and a ground state 
$U$ of \eqref{eq}  such that 
\begin{equation*}
u=U(\cdot-\hat{q})+w
\quad
\hbox{and}
\quad
\|w\|\leq \lambda.
\end{equation*}
Indeed, let us assume by contradiction that there exist $\lambda>0$ and a minimizing sequence $\{u_n\}$ such that for every $q_n\in\mathbb{R}^N$ and $U$ ground state
\begin{equation}
\label{eq:abs-2}
\| u_n - U(\cdot - q_n) \| > \lambda.
\end{equation}
Since, by Lemma~\ref{segnomin}, $\{u_n\}$ is relatively compact up to translations, there exists a ground state $U\in H^1(\mathbb{R}^N)$ such that $w_n=u_n - U(\cdot - q_n) \to 0$ in $H^{1}(\mathbb{R}^N)$ and this contradicts \eqref{eq:abs-2}.\\
Now, let us fix $\lambda>0$. We can suppose that $\lambda<1$. Then, for $\sqrt{\sigma}\lambda$, there
exists $\delta>0$ such that, for all $u\in J^{m_\sigma+\delta}\cap \Sigma_{\sigma}$,
there exists $\hat{q}\in\mathbb{R}^{N}$ and a ground state $U$ such that $u=U(\cdot-\hat{q})+w$ and $\|w\|\leq \sqrt{\sigma}\lambda$.
Moreover, by Lemma \ref{unif-decay},
there exists $\hat{R}>0$ such that, for every ground state $U$,
\begin{equation*}
\int_{\mathbb{R}^{N}\setminus B_{\hat{R}}(0)}U^{2}<\sigma\lambda (1-\sqrt{\lambda})^2.
\end{equation*}
Thus, if $u\in J^{m_\sigma+\delta}\cap \Sigma_{\sigma}$, we have
\begin{align*}
\frac{1}{\sigma}\int_{\mathbb{R}^N\setminus B_{\hat{R}}(\hat{q})} u^{2} 
& \leq  
\frac{1}{\sigma}\int_{\mathbb{R}^N\setminus B_{\hat{R}}(\hat{q})} U^{2}(\cdot-\hat{q})
+\frac{1}{\sigma}\|w\|_2^2
+\frac{2}{\sigma}\|w\|_2 \left( \int_{\mathbb{R}^N\setminus B_{\hat{R}}(\hat{q})} U^{2}(\cdot-\hat{q}) \right)^{1/2}\\
& = 
\frac{1}{\sigma}\int_{\mathbb{R}^N\setminus B_{\hat{R}}(0)} U^{2}
+\frac{1}{\sigma}\|w\|_2^2
+\frac{2}{\sigma} \|w\|_2 \left( \int_{\mathbb{R}^N\setminus B_{\hat{R}}(0)} U^{2} \right)^{1/2}\\
& <
\lambda (1-\sqrt{\lambda})^2 + \lambda^2 + 2 \lambda \sqrt{\lambda} (1-\sqrt{\lambda}) = \lambda
\end{align*}
which concludes the proof.
\end{proof}

\noindent
As a consequence of the previous lemma, we can describe the concentration properties of the solutions of \eqref{ch}.
\begin{lem}
\label{lemmaconc} 
For any $\lambda>0$, there exist $\delta=\delta(\lambda)>0$
and a $\hat{R}=\hat{R}(\lambda)>0$ such that for any $\psi$
solution of (\ref{ch}) with $\eps^{\gamma}|\psi(t,\eps^{\beta}x)|\in J^{m_\sigma+\delta}\cap \Sigma_{\sigma}$
for all $t\in( 0,\infty)$, there exists $\hat{q}_{\eps}(t)\in\mathbb{R}^{N}$, which depends on $\lambda$, $\eps$, $t$ and $\psi$,  for
which 
\[
\frac{1}{\sigma}\int_{\mathbb{R}^{N}\setminus B_{\eps^{\beta}\hat{R}}(\hat{q}_{\eps}(t))}|\psi(t,x)|^{2}dx<\lambda.
\]
\end{lem}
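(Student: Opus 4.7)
The plan is to reduce Lemma~\ref{lemmaconc} to Lemma~\ref{conc1} by a straightforward rescaling argument. Given $\lambda>0$, I would first apply Lemma~\ref{conc1} to this very $\lambda$ to obtain constants $\delta=\delta(\lambda)>0$ and $\hat R=\hat R(\lambda)>0$ that depend only on $\lambda$ (not on $\eps$, $t$, or the particular solution $\psi$). These are the same $\delta$ and $\hat R$ that will appear in the conclusion.

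Next, for each fixed $t\in(0,\infty)$, I would introduce the rescaled function
\[
u_t(x):=\eps^{\gamma}\,|\psi(t,\eps^{\beta}x)|,\qquad x\in\mathbb{R}^N,
\]
which by hypothesis belongs to $J^{m_\sigma+\delta}\cap\Sigma_\sigma$. Lemma~\ref{conc1} then furnishes a point $\tilde q_\eps(t)\in\mathbb{R}^N$ (depending on $u_t$, and hence on $\eps,t,\psi$) with
\[
\frac{1}{\sigma}\int_{\mathbb{R}^N\setminus B_{\hat R}(\tilde q_\eps(t))}u_t^{2}(x)\,dx<\lambda.
\]

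The final step is to undo the rescaling. Performing the change of variables $y=\eps^{\beta}x$ (so $dy=\eps^{N\beta}dx$) in the integral above, the ball $B_{\hat R}(\tilde q_\eps(t))$ transforms into $B_{\eps^{\beta}\hat R}(\eps^{\beta}\tilde q_\eps(t))$, and the integrand picks up a factor $\eps^{2\gamma-N\beta}$. By the normalization~\eqref{eq:rel2}, $N\beta-2\gamma=0$, so this factor is exactly $1$. Setting
\[
\hat q_\eps(t):=\eps^{\beta}\,\tilde q_\eps(t)
\]
yields
\[
\frac{1}{\sigma}\int_{\mathbb{R}^N\setminus B_{\eps^{\beta}\hat R}(\hat q_\eps(t))}|\psi(t,y)|^{2}\,dy<\lambda,
\]
which is the desired estimate.

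Since the argument is essentially a change of variables plus an invocation of Lemma~\ref{conc1}, there is no serious obstacle; the only point that requires care is verifying that the rescaling preserves the $L^2$-norm (so that $u_t\in\Sigma_\sigma$ is compatible with the hypothesis) and that the Jacobian factor cancels exactly, both of which rely on the normalization condition $N\beta=2\gamma$ established in~\eqref{eq:rel2}. The dependence of $\hat q_\eps(t)$ on $\psi$ is inherited from the corresponding dependence of $\tilde q_\eps(t)$ provided by Lemma~\ref{conc1}, while $\hat R$ and $\delta$ depend only on $\lambda$, as claimed.
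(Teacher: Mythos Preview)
Your proposal is correct and follows essentially the same approach as the paper: apply Lemma~\ref{conc1} to obtain $\delta$ and $\hat R$, set $v(x)=\eps^{\gamma}|\psi(t,\eps^{\beta}x)|$, invoke Lemma~\ref{conc1} to get a concentration point $\bar q$, and then rescale using \eqref{eq:rel2} to define $\hat q_\eps(t)=\eps^{\beta}\bar q$. The paper's argument is identical in structure and detail.
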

\begin{proof} 
Let $\lambda>0$ be fixed. By Lemma \ref{conc1} we have that there exist $\delta=\delta(\lambda)>0$ and a $\hat{R}=\hat{R}(\lambda)>0$ such that for any $u\in J^{m_\sigma+\delta}\cap \Sigma_{\sigma}$, there exists $\hat{q}\in\mathbb{R}^{N}$ such that \eqref{eq:conc} holds. So we fix $\eps$, $t$ and $\psi$ solution of (\ref{ch}), such that $v(x)=\eps^{\gamma}|\psi(t,\eps^{\beta} x)|\in J^{m_\sigma+\delta}\cap \Sigma_{\sigma}$. We have that there exists $\bar{q}=\bar{q}(v)\in \mathbb{R}^{N}$ such that, using \eqref{eq:rel2},
\[
\frac{1}{\sigma}\int_{\mathbb{R}^{N}\setminus B_{\hat{R}}(\bar{q})}|v|^{2}
=
\frac{1}{\sigma}\int_{\mathbb{R}^{N}\setminus B_{\eps^\beta\hat{R}}(\eps^\beta\bar{q})}|\psi(t,x)|^{2} dx
< \lambda.
\]
Then we conclude taking $\hat{q}_{\eps}(t)=\eps^\beta\bar{q}$, which depends on $\lambda$, $\eps$, $t$ and $\psi$, while $\hat{R}$ depends only upon the value of $\lambda$.
\end{proof}

\noindent
Now we are ready to prove Proposition \ref{teoconcinf}.
\begin{proof}[Proof of Proposition \ref{teoconcinf}.] 
If $\psi$ is a solution of \eqref{ch} with {\em admissible} initial datum, then, by the conservation of the energy $E_\eps$ and by \eqref{bkqh}, \eqref{Schenergy} and \eqref{relfond}, we have
\begin{equation}
\label{eq:stimaenB}
E_\eps(\psi)\leq \eps^{2(1-\beta)} J(U+w) + \frac{K^2\sigma}{2} + K.
\end{equation} 
Moreover, since $J$ is $C^{1}$ in $H^{1}(\mathbb{R}^N)$,
we have 
\begin{equation}
\label{eq:stimaJ}
J(U+w)\le m_\sigma+C\|w\|\le m_\sigma +C\eps^{2(\beta-1)}.
\end{equation}
So, combining \eqref{eq:stimaenB} and \eqref{eq:stimaJ}, we obtain
\begin{equation}
\label{eq77bis}
E_{\eps}(\psi) \leq \eps^{2(1-\beta)}m_\sigma+C.
\end{equation}
Thus, in light of
(\ref{eq77bis}) and because $V(x)\geq 0$, if $u_{\eps}(t,x) = |\psi(t,x)|$, we get 
\begin{equation}
\label{Jh}
J_{\eps}(u_{\eps}) 
= 
E_{\eps}(\psi)-G(u_\eps,S_\eps)
\leq 
\eps^{2(1-\beta)}m_\sigma +C.
\end{equation}
Then, by \eqref{relfond}, \eqref{eq:rel2} and \eqref{Jh} we get 
\[
J(\eps^{\gamma}u_{\eps}(t,\eps^{\beta}x))
=
\eps^{2(\beta - 1)} J_{\eps}(u_{\eps})
\leq 
m_\sigma +\eps^{2(\beta-1)}C.
\]
So, since, by the conservation of the hylenic charge, 
\[
\|\eps^{\gamma}u_{\eps}(t,\eps^{\beta}x)\|_2^2
=
\|\eps^{\gamma}u_{\eps}(0,\eps^{\beta}x)\|_2^2
=
\|U+w\|_{2}^2
=\sigma,
\]
if $\beta>1$ and for $\eps$ small we can apply Lemma \ref{lemmaconc} and we conclude.
\end{proof}

\section{Proof of the main result}
\label{final}

\subsection{Barycenter and concentration point}
In this subsection, we provide the dynamics of the barycenter and we estimate the distance between the concentration
point and the barycenter of a solution $\psi$ for a potential
satisfying (\ref{it:V0}) and (\ref{it:V2}).
\begin{prop}
\label{din} 
Let $\psi$ be a global solution of \eqref{ch}
with initial data $\psi(0,x)$ such that 
\[
\int |x||\psi(0,x)|^{2}dx<+\infty.
\]
Then the map $q_{\eps}:\mathbb{R}\rightarrow\mathbb{R}^{N}$, where $q_\eps (t)$ is given by \eqref{eqbar}, is well defined, is $C^1$ and
\begin{align}
\dot{q}_{\eps}(t)
& =
\frac{\eps^N}{\|\psi(t)\|_2^2}\int p_\eps(t,x)dx\label{pina}\\
\ddot{q}_{\eps}(t)
& =
- \frac{1}{\|\psi(t)\|_2^2} \int \nabla V(x)|\psi(t,x)|^{2}dx \label{pinu}
\end{align}
\end{prop}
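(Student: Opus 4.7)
The plan is to differentiate under the integral sign in the definition \eqref{eqbar} of $q_\eps(t)$, using the conservation identities \eqref{p1}--\eqref{p2}. Two preliminary facts are used throughout: by conservation of the hylenic charge, $\|\psi(t)\|_2^2 \equiv \|\psi(0)\|_2^2$ is a positive constant, so the denominator may be pulled out of the time derivative; and since $\psi(t,\cdot) \in H^2(\mathbb{R}^N)$, the momentum $p_\eps(t,\cdot) = \eps^{1-N}\mathfrak{Im}(\bar\psi\,\nabla\psi)$ lies in $L^1(\mathbb{R}^N)$ with $\|p_\eps(t)\|_1 \le \eps^{1-N}\|\psi(t)\|_2\,\|\nabla\psi(t)\|_2$ continuous in $t$, so that $\{p_\eps(t,\cdot) : t \in [0,T]\}$ is compact in $L^1(\mathbb{R}^N)$ for every $T>0$.

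First I would verify that $\int|x|\,|\psi(t,x)|^2\,dx < +\infty$ for every $t\ge 0$, so that $q_\eps$ is well-defined. Fix a cutoff $\chi_R \in C_c^\infty(\mathbb{R}^N,[0,1])$ with $\chi_R \equiv 1$ on $B_R$, $\operatorname{supp}\chi_R \subset B_{2R}$, $|\nabla\chi_R| \le 2/R$, and a smoothing $\phi_\delta(x) := (|x|^2+\delta)^{1/2}$ of $|x|$. Multiplying \eqref{p1} by $\chi_R\phi_\delta$ and integrating by parts yields
\begin{equation*}
\frac{d}{dt}\int \chi_R\,\phi_\delta\,|\psi|^2\,dx = \eps^N \int \nabla(\chi_R\phi_\delta)\cdot p_\eps\,dx,
\end{equation*}
whose right-hand side is bounded by $C_\eps\,\|p_\eps(t)\|_1$ uniformly in $R,\delta$ (since $|\nabla\phi_\delta|\le 1$). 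Integrating in $t$ and letting first $\delta\to 0^+$ and then $R\to +\infty$ via monotone convergence gives the desired first-moment bound, locally uniformly in $t$.

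To establish \eqref{pina}, set $F_R(t) := \int \chi_R(x)\,x\,|\psi(t,x)|^2\,dx$. By \eqref{p1} and integration by parts,
\begin{equation*}
F_R'(t) = \eps^N \int \chi_R(x)\,p_\eps(t,x)\,dx + \eps^N \int x\,(\nabla\chi_R(x)\cdot p_\eps(t,x))\,dx.
\end{equation*}
As $R \to +\infty$ the first term converges to $\eps^N \int p_\eps(t,x)\,dx$ by dominated convergence, while the remainder is bounded by $4\eps^N \int_{|x|\ge R} |p_\eps(t,x)|\,dx$, which tends to $0$ locally uniformly in $t$ thanks to the $L^1$-compactness of $\{p_\eps(t,\cdot)\}_{t\in[0,T]}$. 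Together with the pointwise convergence $F_R(t) \to \int x\,|\psi(t,x)|^2\,dx$, a standard calculus argument then yields that $q_\eps$ is $C^1$ with derivative \eqref{pina}. Formula \eqref{pinu} is immediate by differentiating \eqref{pina}, using \eqref{p2} and the fact that $t \mapsto \int p_\eps(t,x)\,dx$ is $C^1$ as recalled in Section~\ref{fio}.

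The main obstacle is justifying the interchange of the time-derivative and the limit $R \to +\infty$ in the passage from $F_R$ to $q_\eps$: it hinges on the locally-uniform-in-$t$ decay of the $L^1$-tails of $p_\eps$, which follows from the assumption $\psi\in C([0,\infty),H^2)$ through the Fréchet--Kolmogorov compactness criterion. Once this is secured, all remaining steps are direct consequences of \eqref{p1}--\eqref{p2} and conservation of charge.
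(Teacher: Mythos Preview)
Your argument is correct and follows the same overall strategy as the paper: regularize, use the identities \eqref{p1}--\eqref{p2}, and pass to the limit. The technical device differs, however. The paper regularizes with an exponential weight, setting $k_\lambda(t)=\int e^{-2\lambda|x|}|x|\,|\psi(t,x)|^2\,dx$, differentiates via \eqref{p1}, bounds $|k_\lambda'(t)|\le \eps\|\psi(t)\|_2\|\nabla\psi(t)\|_2$ uniformly in $\lambda$, integrates in time, and then invokes Fatou's lemma as $\lambda\to 0$ to get the first-moment bound; it then simply asserts that ``the same regularization technique'' yields the $C^1$ regularity and \eqref{pina}. Your approach instead uses a compactly supported cutoff $\chi_R$ (plus the smoothing $\phi_\delta$ of $|x|$) and monotone/dominated convergence, and you are more explicit than the paper about why one may interchange the limit $R\to\infty$ with the time derivative, appealing to the locally-uniform-in-$t$ smallness of the $L^1$-tails of $p_\eps$. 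That last point follows already from the continuity of $t\mapsto p_\eps(t,\cdot)$ in $L^1$ (continuous image of $[0,T]$ is compact), so Fr\'echet--Kolmogorov is more than you need, but the conclusion is the same. In short: same scheme, different regularization; the paper's exponential weight is a touch slicker for the first-moment bound, while your cutoff argument makes the passage to \eqref{pina} more transparent.
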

\begin{proof}
We prove that $q_\eps$ is well defined by a regularization argument.
Let $\lambda>0$ and
$$
k_\lambda(t)=\int e^{-2\lambda|x|}|x||\psi(t,x)|^2dx.
$$
By (\ref{p1}) we have
\[
k'_\lambda(t)
=
-\eps^N\int 
e^{-2\lambda|x|}|x| \operatorname{div}(p_\eps(t,x)) dx
=\eps^N \int
e^{-2\lambda|x|} (1-2\lambda|x|)\frac x{|x|}\cdot p_\eps(t,x) dx.
\]
Thus, on account of \eqref{eq:peps},
\[
|k'_\lambda(t)|\le \eps\|\psi(t) \|_2 \|\nabla \psi(t) \|_2
\]
and then
\[
k_\lambda(t)
=
k_\lambda(0)+\int_0^t k'_\lambda(s)ds
\leq
\int |x||\psi(0,x)|^{2}dx
+\eps\int_0^t
\|\psi(s) \|_2 \|\nabla \psi(s) \|_2 ds.
\]
Hence, using Fatou's Lemma, we get that for all $t\in(0,\infty)$
\[
\int |x||\psi(t,x)|^{2}dx<+\infty
\]
and so $q_\eps$ is well defined for all $t$.
With the same regularization technique, we can also prove that $q_\eps$ is $C^1$ and that \eqref{pina} holds by \eqref{p1}. Finally, equation \eqref{pinu} is a straightforward consequence of \eqref{pina} and \eqref{p2}.
\end{proof}
\noindent
Now, for $K>0$ fixed, let $\psi$ be a global solution of \eqref{ch} such that $\psi\in C([0,\infty),H^{2}(\mathbb{R}^{N}))\cap C^{1}((0,\infty),L^{2}(\mathbb{R}^{N}))$ and
the initial data $\psi(0,x)\in B_{\eps}^{K}$. Moreover let $u_{\eps}(t,x) = |\psi(t,x)|$.
\begin{lem}
\label{lemmabar1} There exists a constant $C>0$ such that, for all $t\in\mathbb{R}$,
\[
\int V(x)u_{\eps}^{2}(t,x)dx\le  C.
\]
\end{lem}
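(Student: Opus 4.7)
The plan is to bound the potential term by exploiting the energy decomposition $E_\eps(\psi)=J_\eps(u_\eps)+G(u_\eps,S_\eps)$ from \eqref{Schenergy} together with conservation of $E_\eps$. Since $V\ge 0$ and $|\nabla S_\eps|^2\ge 0$, controlling $\int V u_\eps^2$ reduces to showing $G(u_\eps,S_\eps)\le C$, which in turn reduces to matching an upper bound on $E_\eps(\psi)$ with a lower bound on $J_\eps(u_\eps(t,\cdot))$; the two bounds will differ only by a constant, since the singular factor $\eps^{2(1-\beta)}m_\sigma$ cancels.

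First I would recall from the proof of Proposition \ref{teoconcinf} that, for admissible initial data $\psi(0,\cdot)\in B_\eps^K$, the combination of \eqref{eq:stimaenB} and \eqref{eq:stimaJ} yields $E_\eps(\psi(0,\cdot))\le \eps^{2(1-\beta)}m_\sigma+C$; by conservation of energy this upper bound propagates to all $t\in[0,\infty)$. Next I would establish the matching lower bound $J_\eps(u_\eps(t,\cdot))\ge \eps^{2(1-\beta)}m_\sigma$. For this I rescale, setting $v_t(x):=\eps^\gamma u_\eps(t,\eps^\beta x)$: by conservation of the hylenic charge together with \eqref{eq:rel2},
\[
\|v_t\|_2^2=\eps^{2\gamma-N\beta}\|\psi(t)\|_2^2=\|\psi(0)\|_2^2=\sigma,
\]
so $v_t\in\Sigma_\sigma$. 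The scaling relation already used in \eqref{eq:Jscal} then gives $J_\eps(u_\eps(t,\cdot))=\eps^{2(1-\beta)}J(v_t)$, and by the variational definition of $m_\sigma$ (Lemma \ref{le:infmin0}) we have $J(v_t)\ge m_\sigma$, which yields the desired lower bound.

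Combining the two estimates with the decomposition of $E_\eps$ and the pointwise nonnegativity of $V$ and $|\nabla S_\eps|^2/2$, I obtain
\[
\int V(x)u_\eps^2(t,x)\,dx\le G(u_\eps,S_\eps)=E_\eps(\psi)-J_\eps(u_\eps)\le \bigl(\eps^{2(1-\beta)}m_\sigma+C\bigr)-\eps^{2(1-\beta)}m_\sigma=C,
\]
uniformly in $t\in[0,\infty)$, which is the claim.

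There is no serious obstacle here: the argument is essentially a bookkeeping of estimates already assembled for Proposition \ref{teoconcinf}. The only subtle point to keep in mind is that $m_\sigma<0$ and $\eps^{2(1-\beta)}\to +\infty$ as $\eps\to 0^+$ (because $\beta>1$), so both inequalities involve a large negative quantity that must cancel \emph{exactly}; this is ensured by the variational characterization of $m_\sigma$ together with the fact that the rescaled state $v_t$ lies on $\Sigma_\sigma$ for every $t$, which is itself a consequence of the charge conservation law and the normalization \eqref{eq:rel2}.
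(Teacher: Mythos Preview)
Your proof is correct and follows essentially the same route as the paper: both use the rescaling $v_t(x)=\eps^\gamma u_\eps(t,\eps^\beta x)\in\Sigma_\sigma$ (via charge conservation and \eqref{eq:rel2}) to get $J_\eps(u_\eps)\ge \eps^{2(1-\beta)}m_\sigma$, combine this with the energy upper bound \eqref{eq77bis}, and conclude from $E_\eps=J_\eps+G$ together with $V\ge 0$ and $|\nabla S_\eps|^2\ge 0$. Your write-up is just a slightly more detailed version of the paper's argument.
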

\begin{proof} 
Since $\eps^{\gamma}u_{\eps}(t,\eps^{\beta}x)\in \Sigma_\sigma$, then, by \eqref{relfond} and \eqref{eq:rel2},
\begin{equation}
\label{eq:boundJeps}
J_{\eps}(u_{\eps}(t,x))=\eps^{2(1-\beta)}J(\eps^{\gamma}u_{\eps}(t,\eps^{\beta}x))\geq \eps^{2(1-\beta)}m_\sigma .
\end{equation}
Moreover, as in the proof of Proposition \ref{teoconcinf}, inequality \eqref{eq77bis} holds and so, using  \eqref{eq:boundJeps}, we get
\[
\int V(x)u_{\eps}^{2}(t,x)dx 
= 
E_{\eps}(\psi)-J_{\eps}(u_{\eps})-\frac{1}{2}\int |\nabla S|^{2}u_{\eps}^{2}(t,x)dx
\leq 
C.
\]
\end{proof}
\noindent
The following lemma shows the boundedness for the barycenter $q_{h}(t)$ defined in \eqref{eqbar}.
\begin{lem}
\label{lemmabar3} 
There exists $K_1>0$ such that for all $t\in[0,\infty)$, $|q_{\eps}(t)|\leq K_1$.
\end{lem}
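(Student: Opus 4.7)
My plan is to control $|q_\eps(t)|$ by bounding the first moment $\int |x|\, u_\eps^2(t,x)\, dx$ uniformly in $t$ and $\eps$, then dividing by the (time-independent) $L^2$-mass. The only nontrivial information I need is the potential bound from Lemma \ref{lemmabar1}, together with assumption (\ref{it:V2}), which forces mass to concentrate in a region where $|x|$ is controlled by $V(x)^{1/a}$.

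\medskip

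First, by conservation of charge and the computation in the proof of Proposition \ref{teoconcinf}, $\|\psi(t)\|_2^2 = \sigma$ for every $t \geq 0$. Hence it suffices to show $\int |x|\, u_\eps^2(t,x)\, dx \leq C$ uniformly in $t$ and $\eps$, and then set $K_1 := C/\sigma$.

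\medskip

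Next, I split the integral at the threshold $R_1 > 1$ from assumption (\ref{it:V2}):
\begin{equation*}
\int_{\mathbb{R}^N} |x|\, u_\eps^2(t,x)\, dx
= \int_{|x|\leq R_1} |x|\, u_\eps^2(t,x)\, dx + \int_{|x|> R_1} |x|\, u_\eps^2(t,x)\, dx.
\end{equation*}
The first term is bounded by $R_1\, \|u_\eps(t)\|_2^2 = R_1 \sigma$. For the second, I use that $|x|>R_1>1$ and $a>1$ to estimate $|x| \leq |x|^a \leq V(x)$, so that by Lemma \ref{lemmabar1}
\begin{equation*}
\int_{|x|>R_1} |x|\, u_\eps^2(t,x)\, dx \leq \int_{|x|>R_1} V(x)\, u_\eps^2(t,x)\, dx \leq \int V(x)\, u_\eps^2(t,x)\, dx \leq C.
\end{equation*}
Combining these two estimates and dividing by $\sigma$ gives $|q_\eps(t)| \leq R_1 + C/\sigma =: K_1$ for all $t \in [0,\infty)$ and all $\eps$ sufficiently small.

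\medskip

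There is no serious obstacle: all the work has already been done in Lemma \ref{lemmabar1}, which in turn rests on the energy bound $E_\eps(\psi) \leq \eps^{2(1-\beta)} m_\sigma + C$ from the proof of Proposition \ref{teoconcinf} (inequality \eqref{eq77bis}), on $V\geq 0$, and on the lower bound $J_\eps(u_\eps) \geq \eps^{2(1-\beta)} m_\sigma$. The only delicate point is simply recognizing that assumption (\ref{it:V2}) with $a>1$ is precisely what lets us pass from control of $\int V u_\eps^2$ to control of the first moment $\int |x|\, u_\eps^2$; if one had only $V(x)\gtrsim \log|x|$ or a sublinear lower bound, this argument would break down.
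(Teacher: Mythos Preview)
Your proof is correct and follows essentially the same route as the paper: split the first moment at radius $R_1$, bound the inner part by $R_1\sigma$, and control the outer part via assumption (\ref{it:V2}) together with Lemma~\ref{lemmabar1}. The only cosmetic difference is that the paper writes the outer estimate as $V(x)\geq |x|^{a-1}\,|x|\geq R_1^{a-1}|x|$ on $\{|x|>R_1\}$, obtaining the slightly sharper bound $C/R_1^{a-1}$ (formulated as inequality~\eqref{eqbar1}, which is reused in Remark~\ref{rembar4} and Lemma~\ref{lemmabar5}), whereas you use directly $|x|\leq |x|^a\leq V(x)$; both are valid and yield the same conclusion for this lemma.
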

\begin{proof} 
By Lemma \ref{lemmabar1} and assumption (\ref{it:V2}) we get that for any $R_{2}\geq R_{1}$ and for any $t\in[0,\infty)$, 
\begin{equation}
\label{eqbar1}
C 
\ge \int_{\mathbb{R}^N\setminus B_{R_{2}}(0)} V(x)u_{\eps}^{2}(t,x)dx
\ge R_{2}^{a-1} \int_{\mathbb{R}^N\setminus B_{R_{2}}(0)} |x|u_{\eps}^{2}(t,x)dx.
\end{equation}
Hence
\[
\left| \int x u_{\eps}^{2}(t,x)dx\right|
\leq 
\int_{\mathbb{R}^N\setminus B_{R_{1}}(0)}|x|u_{\eps}^{2}(t,x)dx
+\int_{B_{R_{1}}(0)}|x|u_{\eps}^{2}(t,x)dx
\leq \frac{C}{R_{1}^{a-1}}+R_{1} \|u_{\eps}(t)\|_2^{2},
\]
so that $|q_{\eps}(t)|\leq R_{1}+C/(R_{1}^{a-1}\sigma)$.
\end{proof}

\begin{rem}
\label{rembar4} By the inequality \eqref{eqbar1} we have also that, if $R_{2}$ is large enough, for all $t\in[0,\infty)$
\[
\frac{1}{\|u_{\eps}(t)\|_2^{2}}
\int_{\mathbb{R}^N\setminus B_{R_{2}}(0)}u_{\eps}^{2}(t,x)dx
\leq\frac{C}{\sigma R_{2}^{a}}<\frac{1}{2}.
\]
\end{rem}
\noindent
Now we show the boundedness of the concentration point $\hat{q}_{\eps}(t)$ defined in Lemma \ref{lemmaconc}.
\begin{lem}
\label{lemmabar5} 
If $0<\lambda<1/2$ and $R_{2}$ large enough we get that
\begin{enumerate}
\item \label{it:q1} for $\eps$ small enough 
\[
\sup_{t\in[0,\infty)}|\hat{q}_{\eps}(t)|<R_{2}+\hat{R}(\lambda)\eps^{\beta}<R_{2}+1;
\]
\item \label{it:q2} for all $R_{3}\geq R_{2}$ and  $\eps$ small enough
\[
\sup_{t\in[0,\infty)}|q_{\eps}(t)-\hat{q}_{\eps}(t)|<\frac{3C}{\sigma R_{3}^{a-1}}+3R_{3}\lambda+\hat{R}(\lambda)\eps^{\beta}.
\]
\end{enumerate}
\end{lem}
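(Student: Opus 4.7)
The plan is to establish (\ref{it:q1}) first by combining Proposition~\ref{teoconcinf} with Remark~\ref{rembar4}, and then to deduce (\ref{it:q2}) by splitting the integral defining $q_\eps(t)-\hat q_\eps(t)$ into three regions, each handled by the appropriate tool (the concentration estimate, the conservation of the $L^2$-norm, or the uniform bound on $\int V u_\eps^2$ provided by Lemma~\ref{lemmabar1} combined with assumption (\ref{it:V2})).

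\emph{For (\ref{it:q1})}, I would fix $\lambda\in(0,1/2)$, choose $R_2\ge R_1$ large enough so that Remark~\ref{rembar4} yields
\[
\frac{1}{\|u_\eps(t)\|_2^2}\int_{\mathbb{R}^N\setminus B_{R_2}(0)}u_\eps^2(t,x)\,dx<\frac{1}{2}
\]
uniformly in $t$, and then invoke Proposition~\ref{teoconcinf}: for $\eps$ small enough the complementary ball $B_{\hat R(\lambda)\eps^\beta}(\hat q_\eps(t))$ carries a fraction $>1-\lambda>1/2$ of the total mass. If $B_{\hat R(\lambda)\eps^\beta}(\hat q_\eps(t))$ and $B_{R_2}(0)$ were disjoint, the concentration ball would sit entirely in $\mathbb{R}^N\setminus B_{R_2}(0)$, forcing more than half the mass into the tail, a contradiction. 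Hence the two balls intersect and $|\hat q_\eps(t)|<R_2+\hat R(\lambda)\eps^\beta$; shrinking $\eps$ further so that $\hat R(\lambda)\eps^\beta<1$ concludes the argument.

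\emph{For (\ref{it:q2})}, I would start from
\[
q_\eps(t)-\hat q_\eps(t)=\frac{1}{\sigma}\int_{\mathbb{R}^N}(x-\hat q_\eps(t))\,u_\eps^2(t,x)\,dx
\]
and decompose $\mathbb{R}^N$ into the pairwise disjoint sets
\[
A=B_{\hat R(\lambda)\eps^\beta}(\hat q_\eps(t)),\qquad B=B_{R_3}(0)\setminus A,\qquad C=\mathbb{R}^N\setminus B_{R_3}(0).
\]
Using part (\ref{it:q1}) and $R_3\ge R_2$, for $\eps$ small enough one has $A\subset B_{R_3}(0)$, so the decomposition is coherent. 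On $A$ the bound $|x-\hat q_\eps(t)|\le\hat R(\lambda)\eps^\beta$ combined with conservation of charge produces a contribution at most $\hat R(\lambda)\eps^\beta$. On $B$, since $B\subset\mathbb{R}^N\setminus A$, Proposition~\ref{teoconcinf} bounds the mass by $\lambda\sigma$, and $|x-\hat q_\eps(t)|\le|x|+|\hat q_\eps(t)|\le R_3+R_2+1\le 3R_3$ (for $R_3\ge R_2+1$) yields a contribution $\le 3R_3\lambda$. On $C$ we have $|x|>R_3\ge R_1$, so (\ref{it:V2}) gives $V(x)\ge|x|^a$; coupling this with $|x-\hat q_\eps(t)|\le|x|+|\hat q_\eps(t)|\le 3|x|$ and Lemma~\ref{lemmabar1} yields
\[
\int_C |x-\hat q_\eps(t)|\,u_\eps^2\,dx\le\frac{3}{R_3^{a-1}}\int V(x)\,u_\eps^2\,dx\le\frac{3C}{R_3^{a-1}},
\]
and dividing by $\sigma$ produces the last term in (\ref{it:q2}). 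Summing the three contributions gives the claimed estimate.

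No single step is deep; the main bookkeeping challenge is to line up the constraints so that they are mutually compatible: $R_2$ must be large enough for Remark~\ref{rembar4}, $\eps$ must be small enough so that part (\ref{it:q1}) gives $\hat R(\lambda)\eps^\beta<1$ and $A\subset B_{R_3}(0)$, and $R_3\ge R_2+1$ must be taken so that the cross-terms in the triangle inequality are absorbed by the factor $3$ in both the $R_3\lambda$ and the $C/R_3^{a-1}$ summand. Once these constraints are arranged, the estimate follows immediately.
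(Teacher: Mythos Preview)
Your approach is the same as the paper's and is correct in substance. Two cosmetic points: the containment $A\subset B_{R_3}(0)$ can fail when $R_3=R_2$ (part~(\ref{it:q1}) only yields $A\subset B_{R_2+2\hat R(\lambda)\eps^\beta}(0)$), but this is harmless since $A\cup B\cup C$ still covers $\mathbb{R}^N$ and overlap only helps an upper bound; and the restriction $R_3\ge R_2+1$ is unnecessary, because $|x-\hat q_\eps(t)|\le R_3+(R_2+1)\le 2R_3+1<3R_3$ already holds for every $R_3\ge R_2>R_1>1$. The paper sidesteps both issues by taking the inner region to be $B_{R_3}(0)\cap B_{\hat R(\lambda)\eps^\beta}(\hat q_\eps(t))$ rather than the full concentration ball, after which your three estimates go through verbatim for all $R_3\ge R_2$.
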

\begin{proof}
By Proposition \ref{teoconcinf}, with $\lambda<1/2$, and by Remark
\ref{rembar4}, it is obvious that the ball $B_{\hat{R}(\lambda)\eps^{\beta}}(\hat{q}_{\eps}(t))\not\subset\mathbb{R}^{N}\setminus B_{R_{2}}(0)$ and 
\[
B_{\hat{R}(\lambda)\eps^{\beta}}(\hat{q}_{\eps}(t))\subset B_{R_{2}+2\hat{R}(\lambda)\eps^{\beta}}(0).
\]
Because $\hat{R}(\lambda)$ does not depend on $\eps$, we can assume $\eps$ so small that $2\hat{R}(\lambda)\eps^{\beta}<1$. Then 
\begin{gather}
|\hat{q}_{\eps}(t)|<R_{2}+2\hat{R}(\lambda)\eps^{\beta}<R_{2}+1,\label{eq:qhat}\\
\label{form8bar}
B_{\hat{R}(\lambda)\eps^{\beta}}(\hat{q}_{\eps}(t))
\subset B_{R_{2}+1}(0),
\end{gather}
and \eqref{eq:qhat} implies (\ref{it:q1}).\\
To prove (\ref{it:q2}), first we estimate the difference between the barycenter and the concentration point.
We have 
\[
|q_{\eps}(t)-\hat{q}_{\eps}(t)|
=\frac{1}{\|u_{\eps}(t)\|_2^{2}}
\left| \int(x-\hat{q}_{\eps}(t))u_{\eps}^{2}(t,x)dx\right| \leq I_1 + I_2 + I_3
\]
where
\begin{align*}
I_{1}
& =
\frac{1}{\|u_{\eps}(t)\|_2^{2}}
\left| \int_{\mathbb{R}^{N}\setminus B_{R_{3}}(0)} (x-\hat{q}_{\eps}(t)) u_{\eps}^{2}(t,x)dx\right|,\\
I_{2}
& =
\frac{1}{\|u_{\eps}(t)\|_2^{2}}
\left| \int_{A_{2}} (x-\hat{q}_{\eps}(t)) u_{\eps}^{2}(t,x)dx\right|,\\
I_{3}
& =
\frac{1}{\|u_{\eps}(t)\|_2^{2}}
\left| \int_{A_{3}} (x-\hat{q}_{\eps}(t)) u_{\eps}^{2}(t,x)dx\right|,
\end{align*}
$A_{2}=B_{R_{3}}(0)\setminus B_{\hat{R}(\lambda)\eps^{\beta}}(\hat{q}_{\eps}(t))$, $A_{3}=B_{R_{3}}(0)\cap B_{\hat{R}(\lambda)\eps^{\beta}}(\hat{q}_{\eps}(t))$ and $R_{3}\geq R_{2}$.
Obviously 
\[
I_{3}\leq\hat{R}(\lambda)\eps^{\beta}.
\]
Moreover, by (\ref{it:q1}) and Proposition \ref{teoconcinf} we have 
\[
I_{2}\leq[2R_{3}+1]\lambda<3R_3\lambda.
\]
Finally, by \eqref{eqbar1}, (\ref{it:q1}) and Remark \ref{rembar4} we have 
\begin{align*}
I_{1}
& \leq
\frac{1}{\|u_{\eps}(t)\|_2^{2}}
\left| \int_{\mathbb{R}^{N}\setminus B_{R_{3}}(0)} |x| u_{\eps}^{2}(t,x)dx\right|
+ \frac{|\hat{q}_{\eps}(t)|}{\|u_{\eps}(t)\|_2^{2}}
\left| \int_{\mathbb{R}^{N}\setminus B_{R_{3}}(0)} u_{\eps}^{2}(t,x)dx\right|\\
& <
\frac{C}{\sigma R_{3}^{a-1}}
+\frac{(R_{3}+1)C}{\sigma R_{3}^{a}}
< \frac{3C}{\sigma R_{3}^{a-1}}
\end{align*}
and we conclude using the independence of $t\in[0,\infty)$.
\end{proof}
\noindent
We notice that $R_{1},R_{2}$ and $R_{3}$ defined in this section
do not depend on $\lambda$.

\subsection{Equation of the traveling soliton}
We prove that the barycenter dynamics is approximatively that of a point particle moving under the effect of an external potential $V$ satisfying our assumptions.
\begin{thm}
\label{mainteoloc} Assume that $V$ satisfies (\ref{it:V0}), (\ref{it:V1}), (\ref{it:V2}).
Given $K>0$, let $\psi$ be a global solution of equation (\ref{ch}), with initial data in $B_{\eps}^{K}$. If $\eps$ is small enough, then we have 
\[
\ddot{q}_{\eps}(t)+\nabla V(q_{\eps}(t))=H_{\eps}(t)
\]
where $\|H_{\eps}(t)\|_{L^\infty(0,\infty)}\to 0$ as $\eps\to 0^+$. 
\end{thm}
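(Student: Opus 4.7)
The plan is to start from Proposition~\ref{din}, which gives
\[
\ddot q_\eps(t) = -\frac{1}{\|\psi(t)\|_2^2}\int \nabla V(x)\,|\psi(t,x)|^2\,dx.
\]
Since the charge is conserved, the quantity $H_\eps(t)=\ddot q_\eps(t)+\nabla V(q_\eps(t))$ can be rewritten as the weighted mean
\[
H_\eps(t) = \frac{1}{\|\psi(t)\|_2^2}\int \bigl[\nabla V(q_\eps(t)) - \nabla V(x)\bigr]\,u_\eps^2(t,x)\,dx,
\]
so the task reduces to showing this integral is $o_\eps(1)$ uniformly in $t\in[0,\infty)$. I would split the integration domain into three regions to be controlled sequentially: the concentration ball $A_1=B_{\hat R\eps^\beta}(\hat q_\eps(t))$, a bounded annulus $A_2=B_R(0)\setminus A_1$, and the far field $A_3=\mathbb{R}^N\setminus B_R(0)$.

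On $A_1$, Lemmas~\ref{lemmabar3} and~\ref{lemmabar5}(\ref{it:q1}) keep both $q_\eps(t)$ and $\hat q_\eps(t)$ in a fixed compact set, so every $x\in A_1$ lies in a compact set independent of $\eps$ and $t$. The triangle inequality yields $|x-q_\eps(t)|\le \hat R\eps^\beta + |q_\eps(t)-\hat q_\eps(t)|$, and Lemma~\ref{lemmabar5}(\ref{it:q2}) renders the second summand arbitrarily small after fixing $R_3$ large and $\lambda$ small. Since $V\in C^2$ and $\nabla V$ is uniformly continuous on compact sets, $|\nabla V(x)-\nabla V(q_\eps(t))|$ can be made uniformly small on $A_1$. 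On $A_2$, the factor $|\nabla V(q_\eps)-\nabla V(x)|$ is bounded by a constant $M(R)$ depending only on $R$, while Proposition~\ref{teoconcinf} gives $\int_{A_2} u_\eps^2\,dx\le \lambda\sigma$; this contribution is thus at most $2M(R)\lambda$, small as soon as $\lambda$ is.

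On $A_3$ I would treat the two terms separately. The piece containing $|\nabla V(q_\eps(t))|$ is bounded since $q_\eps$ is bounded, and Lemma~\ref{lemmabar1} together with (V2) yields $\int_{|x|>R}u_\eps^2\le C/R^a$. For the piece with $\nabla V(x)$, assumption (V1) and H\"older's inequality with conjugate exponents $1/b,\,1/(1-b)$ give
\[
\int_{|x|>R}|\nabla V(x)|\,u_\eps^2\,dx\le \int_{|x|>R} V^b\,u_\eps^2\,dx\le \Bigl(\int V\,u_\eps^2\Bigr)^{b}\Bigl(\int_{|x|>R}u_\eps^2\Bigr)^{1-b}\le C^b\bigl(C/R^a\bigr)^{1-b},
\]
which tends to $0$ as $R\to\infty$, uniformly in $\eps$ and $t$.

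The order of choices, given $\eta>0$, is: first fix $R$ large enough that the $A_3$ contribution is below $\eta/3$; then fix $R_3$ large and $\lambda>0$ small so that the $A_2$ bound and the perturbation $|q_\eps(t)-\hat q_\eps(t)|$ are controlled by $\eta/3$; finally pick $\eps_0>0$ so that, for $\eps<\eps_0$, the residue $\hat R\eps^\beta$ is negligible, the hypotheses of Proposition~\ref{teoconcinf} and Lemma~\ref{lemmabar5} are satisfied, and the $A_1$ contribution is below $\eta/3$. The principal technical obstacle is the far-field: $\nabla V$ is not globally bounded and the $L^2$-tail of $u_\eps$ alone cannot absorb such growth. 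The interpolation between (V1) and the uniform bound $\int V\,u_\eps^2\le C$ furnished by Lemma~\ref{lemmabar1} via the H\"older exponents $1/b$ and $1/(1-b)$ is the key ingredient that closes the argument.
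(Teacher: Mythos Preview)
Your argument is correct and follows essentially the same route as the paper: both rely on Proposition~\ref{teoconcinf}, Lemmas~\ref{lemmabar1}--\ref{lemmabar5}, and the H\"older interpolation based on (\ref{it:V1}) to control the far field. The only cosmetic difference is that the paper first inserts the concentration point $\hat q_\eps(t)$ as a reference, writing $H_\eps=[\nabla V(q_\eps)-\nabla V(\hat q_\eps)]+\sigma^{-1}\int[\nabla V(\hat q_\eps)-\nabla V(x)]u_\eps^2$ and splitting the integral into pieces $L_1,L_2,L_3$, whereas you work directly with $q_\eps$ and absorb the $|q_\eps-\hat q_\eps|$ term into the $A_1$ estimate via the triangle inequality; the ingredients and the order of limits are the same.
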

\begin{proof} 
By \eqref{pinu} it is sufficient to estimate
\[
H_{\eps}(t)
= 
[\nabla V(q_{\eps}(t))- \nabla V(\hat{q}_{\eps}(t))]
+ \frac{1}{\|u_\eps(t)\|_2^2} 
\int [\nabla V(\hat{q}_{\eps}(t))-\nabla V(x)]u_\eps^2(t,x)dx.
\]
We set 
\[
M=\max \left\{|\partial^{\alpha}V(\tau)|\;\vert\;\alpha=1,2 \hbox{ and } |\tau|\leq K_{1}+R_{2}+1\right\}
\]
where $K_{1}$ is defined in Lemma \ref{lemmabar3} and $R_{2}$ is
defined in Remark \ref{rembar4}.
By Lemma \ref{lemmabar3} and Lemma \ref{lemmabar5}, for any $R_{3}\geq R_{2}$, we get 
\begin{equation}
|\nabla V(q_{\eps}(t))-\nabla V(\hat{q}_{\eps}(t))| 
\leq  M\left(\frac{3C}{\sigma R_{3}^{a-1}}+3R_{3}\lambda +\hat{R}(\lambda)\eps^{\beta}\right).
\label{formH1}
\end{equation}
Moreover, we consider
\[
\frac{1}{\|u_\eps(t)\|_2^2} 
\left|\int_{\mathbb{R}^{N}}[\nabla V(\hat{q}_{\eps}(t))-\nabla V(x)]u_\eps^2(t,x)dx\right|
\leq L_1+L_2+L_3
\]
with 
\begin{align*}
L_{1}
& =
\frac{1}{\|u_\eps(t)\|_2^2} 
\int_{B_{\hat{R}(\lambda)\eps^{\beta}}(\hat{q}_{\eps}(t))}
|\nabla V(\hat{q}_{\eps}(t))-\nabla V(x)|u_{\eps}^{2}(t,x)dx,\\
L_{2} 
& = 
\frac{1}{\|u_\eps(t)\|_2^2}
\int_{\mathbb{R}^{N}\setminus B_{\hat{R}(\lambda)\eps^{\beta}}(\hat{q}_{\eps}(t))}
|\nabla V(x)|u_{\eps}^{2}(t,x)dx,\\
L_{3} 
& =
\frac{1}{\|u_\eps(t)\|_2^2}
\int_{\mathbb{R}^{N}\setminus B_{\hat{R}(\lambda)\eps^{\beta}}(\hat{q}_{\eps}(t))}
|\nabla V(\hat{q}_{\eps}(t))|u_{\eps}^{2}(t,x)dx.
\end{align*}
By Proposition \ref{teoconcinf} and Lemma \ref{lemmabar5} we have 
\begin{equation}
\label{formL3}
L_{3}<M\lambda
\end{equation}
and
\begin{equation}
\label{formL1}
L_{1}
\leq M\hat{R}(\lambda)\eps^{\beta}.
\end{equation}
Finally,
\begin{equation}
\label{formL2}
L_{2}\leq M\lambda + \left( \frac{C}{\sigma} \right)^{b}\lambda^{1-b},
\end{equation}
since, by (\ref{it:V1}), \eqref{form8bar}, Proposition \ref{teoconcinf} and \eqref{eqbar1}, for $R_2\geq R_1$,
\begin{align*}
&\frac{1}{\|u_\eps(t)\|_2^2}
\int_{\mathbb{R}^N\setminus B_{R_{2}+1}(0)}
|\nabla V(x)| u_{\eps}^{2}(t,x)dx\\
& \leq
\frac{1}{\|u_\eps(t)\|_2^2}
\left( 
\int_{\mathbb{R}^N\setminus B_{R_{2}+1}(0)}
|\nabla V(x)|^{1/b} u_{\eps}^{2}(t,x)dx
\right)^b
\left(
\int_{\mathbb{R}^N\setminus B_{R_{2}+1}(0)}
 u_{\eps}^{2}(t,x)dx
\right)^{1-b}\\
& \leq
\left(
\frac{1}{\|u_\eps(t)\|_2^2}
\int_{\mathbb{R}^N\setminus B_{R_{2}+1}(0)}
V(x) u_{\eps}^{2}(t,x)dx
\right)^b
\lambda^{1-b}
\leq
\left(
\frac{C}{(R_2+1)^{a-1} \sigma}
\right)^b
\lambda^{1-b}\\
& \leq
\left(
\frac{C}{\sigma}
\right)^b
\lambda^{1-b}
\end{align*}
and, again by Proposition \ref{teoconcinf}, we have 
\[
\frac{1}{\|u_\eps(t)\|_2^2}
\int_{B_{R_{2}+1}(0)\setminus B_{\hat{R}(\lambda)\eps^{\beta}}(\hat{q}_{\eps}(t))}
|\nabla V(x)|u_{\eps}^{2}(t,x)dx\leq M\lambda.
\]
So, by \eqref{formH1}, \eqref{formL3}, \eqref{formL1} and \eqref{formL2}, we have 
\[
|H_{\eps}(t)|\leq
\frac{3CM}{\sigma R_{3}^{a-1}}
+\left(\frac{C}{\sigma}\right)^{b}\lambda^{1-b}
+M(2+3R_{3})\lambda
+2M\hat{R}(\lambda)\eps^{\beta}.
\]
At this point we can have $\|H_{\eps}(t)\|_{L^\infty(0,\infty)}$ arbitrarily
small choosing firstly $R_{3}$ sufficiently large, secondly $\lambda$ sufficiently small and, finally, $\eps$ small enough. 
\end{proof}

\begin{proof}[Proof of Theorem~\ref{teo1}] By Theorem \ref{mainteoloc}
we immediately conclude the proof of Theorem \ref{teo1}.
\end{proof}

%
%

\bigskip

\bigskip
\medskip


\begin{thebibliography}{99}

\bibitem{hylo}
J.~Bellazzini, V.~Benci, C.~Bonanno, E.~ Sinibaldi,
{\em Hylomorphic solitons in the nonlinear Klein-Gordon equation},
Dyn. Partial Differ. Eq.  {\bf 6}  (2009),  311--334.

\bibitem{BB10}
J.~Bellazzini, C.~Bonanno, 
{\em Nonlinear Schr\"odinger equations with strongly singular potentials}, 
Proc. Roy. Soc. Edinburgh Sect. A {\bf 140} (2010), 707--721.

\bibitem{BGM1}
V.~Benci, M.G.~Ghimenti, A.M.~Micheletti, 
{\em The nonlinear Schr\"odinger equation: soliton dynamics}, 
J. Differential Equations {\bf 249} (2010), 3312--3341. 

\bibitem{BGM2}
V.~Benci, M.G.~Ghimenti, A.M.~Micheletti, 
{\em On the dynamics of solitons in the nonlinear Schr\"odinger equation}, 
Arch. Ration. Mech. Anal. {\bf 205} (2012), 467--492.


\bibitem{BJ00} 
J.C.~Bronski, R.L.~Jerrard, 
{\em Soliton dynamics in a potential}, 
Math. Res. Lett. {\bf 7} (2000), 329--342.

\bibitem{CaoCa}
P.~Cao, R.~Carles, 
{\em Semi-classical wave packet dynamics for Hartree equations},
Rev. Math. Phys. \textbf{23} (2011), 933--967.

\bibitem{CaKa}
R.~Carles, C.~Fermanian-Kammerer, 
{\em Nonlinear coherent states and Ehrenfest time for Schr\"odinger equation}, 
Comm. Math. Phys. {\bf 301} (2011), 443--472.


\bibitem{CCS}
S.~Cingolani, M.~Clapp, S.~Secchi, 
{\em Multiple solutions to a magnetic nonlinear Choquard equation}, 
Z. Angew. Math. Phys. {\bf 63} (2012),  233--248.

\bibitem{CSS} 
S.~Cingolani, S.~Secchi, M.~Squassina,
{\em Semi-classical limit for Schr\"odinger equations with magnetic field and Hartree-type nonlinearities},
Proc. Roy. Soc. Edinburgh Sect. A {\bf 140} (2010), 973--1009. 


\bibitem{PieMar}
P.~d'Avenia, M.~Squassina,  
{\em Soliton dynamics for the Schr\"odinger-Newton system}, 
Math. Models Methods Appl. Sci., to appear, \href{http://dx.doi.org/10.1142/S0218202513500590}{DOI: 10.1142/S0218202513500590}.

\bibitem{DF}
I.~Ekeland, 
{\em On the variational principle}, 
J. Math. Anal. Appl. {\bf 47} (1974), 324--353.


\bibitem{FGJS04} 
J.~Fr\"ohlich, S.~Gustafson, B.L.G.~Jonsson, I.M.~Sigal, 
{\em Solitary wave dynamics in an external potential},
Comm. Math. Phys. {\bf 250} (2004),  613--642.

\bibitem{Fro} 
J.~Fr\"ohlich, B.L.G.~Jonsson, E.~Lenzmann, 
{\em Boson stars as solitary waves},
Comm. Math. Phys. {\bf 274} (2007),  1--30.

\bibitem{froltsaiyau}
J.~Fr\"ohlich, T.-P.~Tsai, H.-T.~Yau, 
{\em On the point-particle (Newtonian) limit of the non-linear Hartree equation},
Comm. Math. Phys. {\bf 225} (2002), 223--274.

\bibitem{Gelfand} 
I.M. Gelfand, S.V. Fomin, 
{\em Calculus of Variations},
Prentice-Hall, Englewood Cliffs, N.J. 1963.

\bibitem{genev-venkov}
H.~Genev, G.~Venkov, 
{\em Soliton and blow-up solutions to the time-dependent Schr\"odinger-Hartree equation},
Discrete Contin. Dyn. Syst. Ser. S {\bf 5} (2012), 903--923.

\bibitem{kera06} 
S.~Keraani, 
{\em Semiclassical limit for nonlinear Schr\"odinger equations with potential II}, 
Asymptot. Anal. {\bf 47} (2006), 171--186.

\bibitem{yutian}
Y.~Lei, 
{\em On the regularity of positive solutions of a class of Choquard type equations}, 
Math Z. {\bf 273} (2013), 883-905. 

\bibitem{Lenz} 
E.~Lenzmann, 
{\em Uniqueness of ground states for pseudorelativistic Hartree equations}, 
Anal. PDE {\bf 2} (2009), 1--27.

\bibitem{LR} 
M.~Lewin, N.~Rougerie,
{\em Derivation of Pekar's polarons from a microscopic model of quantum crystal}, 
SIAM J. Math. Anal. {\bf 45} (2013), 1267--1301.

\bibitem{Lieb} 
E.H.~Lieb,
{\em Existence and uniqueness of the minimizing solution of Choquard's nonlinear equation}, 
Stud. Appl. Math. {\bf 57} (1977), 93--105. 

\bibitem{Li80}
P.-L.~Lions,  
{\em The Choquard equation and related questions}, 
Nonlinear Anal. {\bf 4} (1980), 1063--1072.

\bibitem{Li84a} 
P.-L.~Lions, 
{\em The concentration-compactness principle in the calculus of variations. The locally compact case. I}, 
Ann. Inst. H. Poincar\'e Anal. Non Lin\'eaire \textbf{1} (1984), 109--145.

\bibitem{lions-b} 
P.-L.~Lions, 
{\em The concentration-compactness principle in the calculus of variations. The locally compact case. II}, 
Ann. Inst. H. Poincar\'e Anal. Non Lin\'eaire \textbf{1} (1984),  223--283.

\bibitem{MZ} 
L.~Ma, L.~Zhao, 
{\em Classification of positive solitary solutions of the nonlinear Choquard equation},
Arch. Ration. Mech. Anal. {\bf 195} (2010), 455--467. 

\bibitem{morozschaft}
V.~Moroz, J.~Van Schaftingen, 
{\it Groundstates of nonlinear Choquard equations: existence, qualitative properties and decay asymptotics},
J. Funct. Anal. {\bf 265} (2013), 153--184.


\bibitem{morozschaft3}
V.~Moroz, J.~Van Schaftingen, 
{\em Semi-classical states for the Choquard equations},
preprint.

\bibitem{Pe54}
S.I.~Pekar 
{\em Untersuchungen \"uber die Elektronen Theorie der Kristalle},
Akademie-Verlag, Berlin, 1954.

\bibitem{Pe63}
S.I.~Pekar 
{\em Research in Electron Theory of Crystals},
Tech. report AEC-tr-5575, U.S. Atomic Energy Commission, 1963.



\bibitem{WW} 
J.~Wei, M.~Winter, 
{\em Strongly interacting bumps for the Schr\"odinger-Newton equations}, 
J. Math. Phys. {\bf 50} (2009), 22 pp.

\bibitem{We85} 
M.I.~Weinstein, 
{\em Modulational stability of ground states of nonlinear Schrödinger equations}, 
SIAM J. Math. Anal. \textbf{16} (1985), 472--491.

\bibitem{We86} 
M.I.~Weinstein, 
{\em Lyapunov stability of ground states of nonlinear dispersive evolution equations}, 
Comm. Pure Appl. Math. \textbf{39} (1986), 51--67. 


\end{thebibliography}
\end{document}